\newtheorem{theorem}{Theorem}%[section]
\newtheorem{lemma}{Lemma}%[section]
\newtheorem{proposition}{Proposition}%[section]
\newtheorem{corollary}{Corollary}%[section]
\newtheorem{definition}{Definition}%[section]
\newtheorem{notation}{\sc Notations}%[section]
\newtheorem{remark}{\sc Remark}%[section]
\newtheorem{example}{\sc Example}%[section]
\newcommand{\mbk}{\mathbb{K}}
\newcommand{\mbn}{\mathbb{N}}
\newcommand{\id}{\mathrm{Id}}
\newcommand{\ov}{\overline}
\newcommand{\Aa}{\mathcal{A}}
\newcommand{\BB}{\mathcal{B}}
\newcommand{\CC}{\mathcal{C}}
\newcommand{\DD}{\mathcal{D}}
\newcommand{\CCC}{\mathscr{C}}
\newcommand{\DDD}{\mathscr{D}}
\newcommand{\QQQ}{\mathscr{Q}}
\newcommand{\ra}{\rightarrow}
\newcommand{\II}{\mathbb{1}}
\newcommand{\III}{\mathbb{I}}
\newcommand{\uCog}{\mathsf{uCog}}
\newcommand{\NilCog}{\mathsf{NilCog}}
\newcommand{\Alg}{\mathsf{Alg}}
\newcommand{\Tfree}{\mathbb{T}}
\newcommand{\Set}{\mathsf{Set}}
\newcommand{\sSet}{\mathsf{sSet}}
\newcommand{\cSet}{\square\mathsf{-Set}}
\newcommand{\ccSet}{\square_c\mathsf{-Set}}
\newcommand{\crSet}{\square_p\mathsf{-Set}}
\newcommand{\ASet}{\mathsf{A-Set}}
\newcommand{\dgMod}{\mathsf{dgMod}}
\newcommand{\catA}{\mathsf{A}}
\newcommand{\catC}{\mathsf{C}}
\newcommand{\catD}{\mathsf{D}}
\newcommand{\catE}{\mathsf{E}}
\newcommand{\catF}{\mathsf{F}}
\newcommand{\catM}{\mathsf{M}}
\newcommand{\catoE}{\mathsf{Cat}_{\mathsf{E}}}
\newcommand{\catoF}{\mathsf{Cat}_{\mathsf{F}}}
\newcommand{\cato}{\mathsf{Cat}}
\newcommand{\catosSet}{\mathsf{Cat}_{\Delta}}
\newcommand{\catoccSet}{\mathsf{Cat}_{\square_c}}
\newcommand{\catocrSet}{\mathsf{Cat}_{\square_p}}
\newcommand{\colim}{\mathrm{colim}}
\newcommand{\poubelle}[1]{}
\title{Cubical model categories and quasi-categories}
\author{Brice Le Grignou}
\email{bricelegrignou@gmail.com}
\date{\today}
\keywords{Enriched categories, cubical sets}
\thanks{The author was supported by the NWO.}
\begin{document}

\maketitle

\begin{abstract}
The goal of this article is to emphasize the role of cubical sets in enriched category theory and infinity-category theory. We show in particular that categories enriched in cubical sets provide a convenient way to describe many infinity-categories appearing in the context of homological algebra.
\end{abstract}

\setcounter{tocdepth}{1}
\tableofcontents

\section*{Introduction}

The goal of this article is to emphasize the role of cubes and cubical sets when dealing with compositions of homotopies.\\

Indeed, let $(\mathsf E, \otimes, \mathbb 1)$ be a monoidal model category, together with the choice of an interval $\mathbb 1 \sqcup \mathbb 1 \hookrightarrow H \xrightarrow{\sim} \mathbb 1$. One can think of the category of simplicial sets with the interval $\Delta[1]$, of the category of chain complexes with the cellular model of the interval, or of the category of differential graded coalgebras with the cellular model of the interval. Then let $(\mathcal A, \gamma , \eta)$ be a monoid in $\mathsf E$ (for instance a simplicial monoid or a differential graded algebra depending on our choice of category $\mathsf E$). A point of $\mathcal A$ is a morphism $a: \mathbb 1 \to \mathcal A$ in the category $\catE$. Then, a path between two points in $\Aa$ is the data of a morphism $H \to \Aa$.
Using the product on $\Aa$, one can define the product of two paths $f$ and $g$ as follows
\[
	H \otimes H \xrightarrow{f \otimes g} \Aa \otimes \Aa \xrightarrow{\gamma} \Aa\ .
\]
The product of $f$ with $g$ is thus a morphism from $H \otimes H$ to $\Aa$ ; that is a square of $\Aa$. Similarly, the product of 
a $n$-cube $f : H^{\otimes n} \to \Aa$ with a $m$-cube $g: H^{\otimes m} \to \Aa$ is a $n+m$-cube $f\cdot g : H^{\otimes n+m} \to \Aa$.
The same phenomenon appears when dealing with a "monoid with many objects", that is a category enriched over $\catE$.
This enlightens the fact that cubes appear naturally when mixing homotopy with composition. In the case where the monoidal structure is Cartesian, that is $\otimes$ is the categorical product $\times$, for instance for simplicial sets, then the interval $H$ has a diagonal map $H \to H \otimes H$. Then, the product of two paths which is a square
induces another path, the diagonal of this square
\[
	H \to H \otimes H \to \Aa \otimes \Aa \xrightarrow{\gamma} \Aa\ .
\]
\newline

There exists a category of cubes $\square_p$ (the $p$ stands for pre), described in details in the book \cite{Cisinski06}, which roughly
consists of cubes of various dimensions $\square_p^n$, together with face inclusions
$\delta_i: \square_p^n \to \square_p^{n+1}$ and contraction along a  direction
$\sigma_i : \square_p^n \to \square_p^{n-1}$. This category has a monoidal product given by  
\[
	\square_p^n \otimes \square_p^m = \square_p^{n + m}\ .
\]
In a similar way as simplicial sets $\sSet = \mathsf{Fun}(\Delta^{\mathrm{op}}, \Set)$ are gluings of points, lines, triangles, tetrahedrons, \ldots,
precubical sets $\crSet = \mathsf{Fun}(\square_c^{\mathrm{op}}, \Set)$ are gluings of  points, lines, squares, cubes, \ldots Moreover, precubical sets represent all the possible homotopy types as well as simplicial sets ; indeed, the category $\crSet$ has a model structure Quillen equivalent to the Kan--Quillen model category of simplicial sets. Besides, cubical sets inherit a monoidal structure from that of cubes.\\

Then all the discussion above about compositions and homotopies is encompassed in the following proposition.

\begin{proposition}[\cite{Cisinski06}]
 Let $(\mathsf E, \otimes, \mathbb 1)$be a monoidal model category. Then, the data of an interval $H$ of $\catE$, is essentially the data of a monoidal Quillen adjunction
 \[
	\begin{tikzcd}[ampersand replacement=\&]
		\crSet \arrow[r,shift left,"L_p^H"] \& \catE\ . \arrow[l,shift left,"R_p^H"]
	\end{tikzcd}
\]
Moreover, this induces another adjunction
 \[
	\begin{tikzcd}[ampersand replacement=\&]
		\catocrSet \arrow[r,shift left,"L_p^H"] \& \catoE \arrow[l,shift left,"R_p^H"]
	\end{tikzcd}
\]
between categories enriched in precubical sets and categories enriched in $\catE$, which is a Quillen adjunction, when the Dwyer-Kan model structure on $\catoE$ exists.
\end{proposition}

Besides, let us consider the simplicial set $\Delta[3]$.
\[
\begin{tikzpicture}
    \draw [->] (0,0) -- (6,0) ;
    \draw [->] (0,0) -- (3,4) ;
    \draw [->] (0,0) -- (3,1.5) ;
    \draw [->] (3,1.5) -- (6,0) ;
    \draw [->] (3,1.5) -- (3,4) ;
    \draw [->] (3,4) -- (6,0) ;
    \draw (0,0) node[below] {$(0)$} ;
    \draw (6,0) node[below] {$(3)$} ;
    \draw (3,4) node[above] {$(2)$} ;
    \draw (3,1.5) node[below] {$(1)$} ;
\end{tikzpicture}
\]
Seen as an infinity-category, it has $4$ objects, that is
$0,1,2,3$. Its morphisms are generated by the edges $(ij)$ for any integers $0 \leq i<j \leq 3$. Then there are exactly $4$ morphisms from $0$ to $3$ that is
$(03)$, $(01) (13)$, $(02) (23)$ and $(01)  (12)(23)$. They are organized into a square a follows.
\[
\begin{tikzpicture}
    \draw (0,0) -- (4,0) ;
    \draw (0,0) -- (0,4) ;
    \draw (4,0) -- (4,4) ;
    \draw (0,4) -- (4,4) ;
    \draw (0,0) node[below] {$(03)$} ;
    \draw (0,4) node[above] {$(01)(13)$} ;
    \draw (4,0) node[below] {$(02)(23)$} ;
    \draw (4,4) node[above] {$(01)(12)(23)$} ;
    \draw (0,4) node[above] {$(01)(13)$} ;
    \draw (0,2) node[left] {$(013)$} ;
    \draw (2,0) node[below] {$(023)$} ;
    \draw (4,2) node[right] {$(012)(23)$} ;
    \draw (2,4) node[above] {$(01)(123)$} ;
    \draw (2,2) node {$(0123)$} ;
\end{tikzpicture}
\]
More generally, the morphisms of $\Delta[n]$ from $i$ to $j$ are organized into a $j-i-1$-cube for $i<j$. In particular the morphisms from $0$ to $n$ are organized into a $n-1$-cube. The face maps $\Delta[n-1] \to \Delta[n]$ induce face maps between cubes. This seems to be the beginning of a functor $W_p$ from $\Delta$ to the category $\catocrSet$ of categories enriched over precubical sets such that $W_{p,n} := W_p(n)$ would be the cubical category with $n+1$ objects $0,\ldots, n$ and 
\[
	W_{p,n} (0,n) = \square_p [n-1] \ ,
\]
and more generally,
\[
	W_{p,n} (i,j) = \square_p [j-i-1] \ ,
\]
for any integers $0 \leq i < j \leq n$. However, the degeneracy map $\sigma_1: \Delta[3] \to \Delta[2]$ would give a functor $W_{p,2} \to W_{p,1}$ corresponding at the level of mapping spaces to a map $\gamma: \square_p[2] \to \square_p[1]$ mimicking the behavior of the function
\begin{align*}
 [0,1]\times[0,1] &\to [0,1]\\
 (x,y) &\mapsto \mathrm{max}(x,y)\ .
\end{align*}
Unfortunately, such a morphism from $\square_p[2]$ to $\square_p[1]$ does not exist. Therefore, one needs to enhance precubes and precubical sets by adding this map $\gamma$ to obtain respectively the category $\square_c$ of cubes with connections and the category $\ccSet$ of cubical sets with connections ; see for instance \cite{Maltsiniotis09}.\\

Actually, there exists a functor $W_c$ from the category  $\Delta$ to the category $\catoccSet$ of categories enriched over cubical sets with connections which has the shape that we hoped earlier. Indeed, $W_n := W_c (n)$ has $n+1$ objects $0,\ldots, n$ and 
\[
	W_{n} (0,n) = \square_c [n-1] \ .
\]
It induces an adjunction
 \[
	\begin{tikzcd}[ampersand replacement=\&]
		\sSet \arrow[r,shift left,"W_c"] \& \catoccSet\ . \arrow[l,shift left,"N^c"]
	\end{tikzcd}
\]
The usual adjunctions relating simplicial sets to categories enriched over a monoidal model category $\catE$ factorizes through this one. Moreover, this is a Quillen adjunction if the category of simplicial sets is endowed with the Joyal model structure. This two facts coupled with some Reedy theory lead us to the following theorem.

\begin{theorem}
 Let $\catE$ be a monoidal model category and suppose that the category $\catoE$ of categories enriched over $\catE$ has a Dwyer-Kan model structure. Then, for any Reedy cofibrant replacement $F$ of the cosimplicial $\catE$-enriched category $n \mapsto [n]$, the induced adjunction
  \[
	\begin{tikzcd}[ampersand replacement=\&]
		\sSet \arrow[r,shift left,"F_!"] \& \catoE\ , \arrow[l,shift left,"F^!"]
	\end{tikzcd}
\]
is a Quillen adjunction.
\end{theorem} 

The use of cubical sets is particularly efficient when dealing with enriched model structures. Simplicial model categories are model categories $\catM$ enriched, tensored and cotensored over the category of simplicial set satisfying an additional axiom which implies that the simplicial category of fibrant-cofibrant objects is a model of the infinity-category that is presented by the model category $\catM$. Replacing the category of simplicial sets by another monoidal model category $\catE$, one obtains the notion of an $\catE$-model category ; see \cite{Hovey99}. By the following proposition, any $\catE$-model category $\catM$ has the structure of a cubical model category.

\begin{proposition}
  Let $(\mathsf E, \otimes, \mathbb 1)$ be a monoidal model category and let $\catM$ be an $\catE$-model category. Then, any choice of an interval (resp. monoidal interval) $H$ in $\catE$ induces the structure of a $\crSet$-model category (resp. $\ccSet$-model category) on $\catM$.
\end{proposition}

In particular, if $\catM$ is a simplicial model category, then it has an induced structure of a cubical model category. We are also interested in the cases where $\catM$ is an $\catE$-model category for some $\catE$ but it is not a simplicial model category ; for instance the Joyal model category and the model category
of algebras over a nonsymmetric differential graded operad where $\catE$ is respectively the Joyal model category and the model category of differential graded coalgebras.

\subsection*{Layout}

This article is organized as follows. In the first section, we describe in details the theory of enriched categories and recall some results about their homotopy theory. The two next sections deal with the categories of precubical sets and cubical sets with connections and their model structures. Many of the results  given there are due to Cisinski. The fourth part makes a link between simplicial sets and enriched categories. The final section applies the material developed to concrete examples.

\subsection*{Relations to other works}

Some results given here were already known. Indeed, many of the results about cubical homotopy are consequences of the work of Cisinski (\cite{Cisinski06}). The idea that the homotopy coherent nerve functors for dg categories and simplicial categories factor through categories enriched over cubical sets with connections was already in \cite{RiveraZenalian16}. Moreover, similar ideas to those of Section 4, already appeared independently in \cite{KapulkinVoevodsky18}. However, to the best of my knowledge, this is the first time that cubical sets are used systematically to study enriched categories. 

\subsection*{Conventions}

\begin{itemize}
\item[$\triangleright$] The category of simplicial sets is denoted $\sSet$. It is usually endowed with the Kan-Quillen model structure. If we endow it with the Joyal model structure, we write $\sSet_J$.
\item[$\triangleright$] We denote by $[n]$ the category with $n+1$ objects $0,\ldots n$ such that
\[
	\hom_{[n]}(i,j) = 	
\begin{cases}
 * \text{ if }i \leq j\ ,\\
 \emptyset \text{ otherwise.}
\end{cases}
\]
Furthermore, $\delta_i^\Delta: [n]\to [n+1]$ is the only injective functor which omits the objects $i$ in $[n+1]$ and $\sigma_i^\Delta: [n]\to [n-1]$ is the only surjective functor which sends the objects $i$ and $i+1$ to $i$. All these categories $[n]$ and these functors generate the category $\Delta$.
 \item[$\triangleright$] Let $\mathcal U < \mathcal U'$ be two universes. Usually, we work with categories whose sets of morphisms are $\mathcal U$-small and whose set of objects is $\mathcal U$-large (that is a subset of $\mathcal U$). In particular, these categories are $\mathcal{U}'$-small, in the sense that their sets of objects and morphisms are $\mathcal{U}'$-small. Then, when performing constructions on a category considered as an object, or when working with <<the category of categories>>, we assume working with $\mathcal{U}'$-small categories.
\end{itemize}

\subsection*{Acknowledgment}
I was supported by the NWO Spinoza grant of Pr. Ieke Moerdijk. Moreover, I would like to thank Gabriel Drummond-Cole and Manuel Rivera for pointing out to me the existence of the article \cite{RiveraZenalian16}. I would also like to thank the anonymous referee for his remarks.

%%%%%%%%%%%%%%%%%%%%%%%%%%%%%%%%%%%%%%%%%%%%%%%%%%%%%%%%%%%%%%%%

\section{Monoidal categories and enriched categories}

\subsection{Monoidal categories and monoidal adjunctions}

In this section, we recall the notions of a monoidal adjunction, of a monoidal natural transformation after Kelly \cite{Kelly74}.

\begin{definition}
 Let  $(\catE, \otimes , \mathbb 1)$ and  $(\catF, \otimes , \mathbb 1)$ be two monoidal categories, and let $F,G:\catC \to \catD$ be two lax monoidal functors between them. A natural transformation $\phi: F\to G$ is monoidal if the following diagrams commute  
 \[
	\begin{tikzcd}[ampersand replacement=\&]
		{F(X)\otimes F(Y)} \arrow[r] \arrow[d,"\phi (X)\otimes \phi (Y)"'] \& {F(X\otimes Y)} \arrow[d,"\phi(X\otimes Y)"]\\
		{G(X)\otimes G(Y)} \arrow[r] \& G(X\otimes Y)
	\end{tikzcd}
	\ \ \ \ 
	\begin{tikzcd}[ampersand replacement=\&]
		\&\II \arrow[ld] \arrow[rd]\\
		F(\II) \arrow[rr,"\phi(\II)"']\&\& G(\II)\ ,
	\end{tikzcd}
\]
for any objects $X,Y$ of the category $\catC$.
\end{definition}

\begin{definition}
 Let  $(\catE, \otimes , \mathbb 1)$ and  $(\catF, \otimes , \mathbb 1)$ be two monoidal categories. A monoidal adjunction between them is the data of an adjunction 
\[
	\begin{tikzcd}[ampersand replacement=\&]
		\catE \arrow[r,shift left,"L"] \& \catF\ , \arrow[l,shift left,"R"]
	\end{tikzcd}
\]
together with structures of lax monoidal functors on $L$ and $R$ so that the unit map $\id{} \to RL$ and the counit map $LR \to \id{}$
are monoidal natural transformations.
\end{definition}

\begin{theorem}\cite{Kelly74}
\label{lemma:unitmonoidal}
\label{lemmamonoidalajunction}
 Let us consider an adjunction $L \dashv R$ between two monoidal categories.
 
\begin{enumerate}
 \item the data of a structure of a lax monoidal functor on $R$ is equivalent to the data of a
  structure of an oplax monoidal functor on $L$;
 \item given an enhancement of $L \dashv R$ into a monoidal adjunction, the structural maps
 $L(X \otimes Y) \to L(X) \otimes L(Y)$ and $L(\II) \to \II$ making $L$ an oplax monoidal functor (as a left adjoint to the
 lax monoidal functor $R$) are inverse to the structural maps $L(X) \otimes L(Y) \to L(X \otimes Y)$ and $\II \to L(\II)$
 making $L$ a lax monoidal functor;
 \item a structure of a lax monoidal functor on $R$ is part of a monoidal adjunction if and only if the corresponding
 structure of an oplax monoidal functor on $L$ is strong.
\end{enumerate}
\end{theorem}

Given a structure of an oplax monoidal structure on $L$, the 
 structure of a lax monoidal functor on $R$ is given by the adjoint morphisms of the maps
\begin{align*}
 	&L(R(X) \otimes R(Y)) \to LR(X) \otimes LR(Y) \to X \otimes Y
	\\
	& L(\II) \to \II .
\end{align*}
 Similarly, given a structure of a lax monoidal structure on $R$, the 
 structure of an oplax monoidal functor on $L$ is given by the adjoint morphisms of the maps
\begin{align*}
 	&X \otimes Y \to RL(X) \otimes RL(Y)\to R(L(X) \otimes L(Y))
	\\
	& \II \to R(\II) .
\end{align*}

\begin{remark}
 Following Theorem \ref{lemmamonoidalajunction}, one can also define a monoidal adjunction
 as an adjunction together with the structure of a strong monoidal functor on the left adjoint.
\end{remark}

\begin{definition}
 A bilinear monoidal category $(\catE, \otimes , \mathbb 1)$ is a monoidal category such that $\catE$ is cocomplete and such that the bifunctor $-\otimes -$ commutes with colimits separately on both sides.
\end{definition}

For any such bilinear monoidal category, the element $\II \in \catC$ induces a cocontinuous functor $i: \Set \to \catE$ such that $i(*) = \II$.This functor has a right adjoint $S$ such that $S(X) = \hom_{\catE}(\II, X)$.

 \[
	\begin{tikzcd}[ampersand replacement=\&]
		\Set \arrow[r,shift left,"i"] \& \catE \arrow[l,shift left,"S"]
	\end{tikzcd}
\]

\begin{proposition}
 The functor $i$ is strong monoidal. Therefore, the adjunction $i \dashv S$ is monoidal.
\end{proposition}

\begin{proof}
Since the monoidal structure is bilinear, then for any sets $X$ and $Y$ we have
\[
i(X) \otimes i(Y) \simeq  \left(\sqcup_{a \in X } \II \right) \otimes \left(\sqcup_{b \in Y } \II \right) \simeq \sqcup_{(a,b) \in X \times Y} \II \otimes \II \simeq \sqcup_{(a,b) \in X \times Y} \II  \simeq i(X \times Y)\ .
\]
\end{proof}

\subsection{Day convolution product}

Let $(\catA,\otimes,\II)$ be a small category endowed with a monoidal structure. Then, the opposite category $\catA^{op}$ inherits a monoidal structure. We denote by $\ASet$ the category of presheaves over $\catA$, that is functors from $\catA^{op}$ to $\Set$.

\begin{definition}
For any presheaves $X,Y$ over $\catA$, the Day convolution product $X \otimes Y$ is the following left Kan extension
 \[
	\begin{tikzcd}[ampersand replacement=\&]
		\catA^{op} \times \catA^{op}
		\arrow[r, "X \times Y"]
		\arrow[d, "\otimes", swap]
		\& \Set \times \Set \arrow[r, "\times"]  
		\& \Set\\
		\catA^{op}\ . \arrow[rru, dotted,"X \otimes Y"']
	\end{tikzcd}
\]

\end{definition}

The Day product may also be defined in the following way. Both $X$ and $Y$ are colimits of representables
\[
\begin{cases}
	X \simeq \colim_{a \in A/X} \ a\ ,\\
	Y \simeq \colim_{a \in A/Y} \ a\ .
\end{cases}
\]
Then,
\[
X\otimes Y \simeq \colim_{(a,a') \in A/X \times A/Y} \ a \otimes a'\ .
\]

\begin{proposition}\cite{Day70}
 The Day product defines a bilinear monoidal structure on the category $\ASet$. Moreover, the Yoneda embedding functor $\catA\to \ASet$ is strong monoidal.
\end{proposition}

\subsection{Enriched categories}

\begin{definition}
 Let  $(\catE, \otimes , \mathbb 1)$ be a monoidal category. A category enriched over $\catE$ (or $\catE$-category) $(\CCC,m,u)$ is the data of
\begin{itemize}
 \item[$\triangleright$] a set of objects $Ob(\CCC)$,
 \item[$\triangleright$] for any objects $x,y\in Ob(\CCC)$, an element $\CCC(x,y)$ of the category $\catE$,
 \item[$\triangleright$] an associative composition $m_{x,y,z}: \CCC(x,y) \otimes \CCC(y,z) \to \CCC(x,z)$,
 \item[$\triangleright$] a unit for this composition $u_x: \II \to \CCC(x,x)$ for any object $x$.
\end{itemize}
A functor $F$ between two such $\catE$-categories $(\CCC,m,u)$ and $(\CCC',m',u')$ is the data of
\begin{itemize}
 \item[$\triangleright$] a function from $Ob(\CCC)$ to $Ob(\CCC')$ also denoted $F$,
 \item[$\triangleright$] for any objects $x,y\in Ob(\CCC)$, a morphism $F_{x,y}: \CCC(x,y) \to \CCC'(F(x),F(y))$,
 \item[$\triangleright$] which commutes with the composition and the unit in the sense that
\[
\begin{cases}
 m'_{F(x),F(y),F(z)} \left( F_{x,y} \otimes F_{y,z} \right) &= F_{x,z} m_{x,y,z}\ ,\\
 u_{F(x)} =F_{x,x}u_x\ ,
\end{cases}
\]
for any objects $x,y,z$ of $\CCC$.
\end{itemize}
This defines the category $\catoE$ of $\catE$-categories.
\end{definition}

Forgetting the composition and the unit in $\catE$-categories, one gets the notion of a $\catE$-quiver.

\begin{definition}
 A $\catE$-quiver $\QQQ$ is the data of a set of objects $Ob(\QQQ)$ together with an element $\QQQ(x,y)$ of $\catE$ for any objects $x,y\in Ob(\QQQ)$. A morphism $F$ of quivers from $\QQQ$ to $\QQQ'$ is the data of a function from $Ob(\QQQ)$ to $Ob(\QQQ')$ also denoted $F$ and, for any objects $x,y\in Ob(\QQQ)$, a morphism $F_{x,y}: \QQQ(x,y) \to \QQQ'(F(x),F(y))$. We denote by $\mathsf{Quiv}_{\catE}$ the category of $\catE$-quivers.
\end{definition}

\begin{lemma}
Suppose that the monoidal category $\catE$ is bilinear. Then, the forgetful functor $O: \catoE \to \mathsf{Quiv}_{\catE}$ has a left adjoint $\Tfree$ such that for any quiver $\QQQ$
\begin{itemize}
 \item[$\triangleright$] the set $Ob(\Tfree\QQQ)$ is exactly the set $Ob(\QQQ)$,
 \item[$\triangleright$] for any objects $x,y \in Ob(\QQQ)$
 \[
\begin{cases}
 \Tfree\QQQ(x,y) = \sqcup_{n\geq 1} \sqcup_{x_0=x, x_1, \ldots, x_n=y} \QQQ(x_0,x_1) \otimes \QQQ(x_1,x_2)\otimes \cdots \otimes \QQQ(x_{n-1},x_n)\text{ if }x\neq y\ ,\\
  \Tfree\QQQ(x,x) =  \II \sqcup \left(\sqcup_{n\geq 1} \sqcup_{x_0=x, x_1, \ldots, x_n=x} \QQQ(x_0,x_1) \otimes \QQQ(x_1,x_2)\otimes \cdots \otimes \QQQ(x_{n-1},x_n)\right)\ ,
\end{cases}
 \]
 \item[$\triangleright$] the composition is given by the concatenation of tensors.
\end{itemize}
 Moreover, the adjunction $\Tfree \dashv O$ is monadic.
\end{lemma}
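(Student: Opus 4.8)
The plan is to reduce to the case of a fixed set of objects and there recognize $\catE$-categories as monoids in an auxiliary monoidal category, so that $\Tfree$ becomes the free monoid functor. Both $\catoE$ and $\mathsf{Quiv}_{\catE}$ carry functors to $\Set$ sending a category or quiver to its set of objects, $O$ commutes with these, and the $\Tfree$ we build will not alter the set of objects. Fixing a set $S$, I would identify the fibre of $\mathsf{Quiv}_{\catE}$ over $S$ with the category $\catE^{S\times S}$ of families $(M(x,y))_{(x,y)\in S\times S}$, and equip the latter with the matrix-multiplication monoidal structure
\[
 (M\odot N)(x,z)=\sqcup_{y\in S} M(x,y)\otimes N(y,z)\ ,
\]
whose unit $I$ is given by $I(x,x)=\II$ and $I(x,y)=\emptyset$ (the initial object) for $x\neq y$. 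Unwinding the definitions identifies the fibre of $\catoE$ over $S$ with the category $\mathsf{Mon}(\catE^{S\times S})$ of monoids in $(\catE^{S\times S},\odot,I)$, with $O$ restricting to the forgetful functor. This is exactly where bilinearity enters: since $-\otimes-$ preserves colimits separately in each variable and coproducts commute with colimits, the bifunctor $-\odot-$ preserves colimits separately in each variable as well.

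Next I would construct the left adjoint fibrewise as the free monoid functor. In a monoidal category whose product preserves colimits in each variable, the free monoid on $M$ is given by the classical formula
\[
 \Tfree M=\sqcup_{n\geq 0} M^{\odot n}\ ,\qquad M^{\odot 0}=I\ ,
\]
with multiplication induced, through the distributivity of $\odot$ over coproducts, by the canonical identifications $M^{\odot p}\odot M^{\odot q}\simeq M^{\odot(p+q)}$ (concatenation of tensors) and with unit the inclusion of the summand $M^{\odot 0}=I$. I would check associativity, unitality, and the universal property, namely that a monoid morphism out of $\Tfree M$ is freely determined by a quiver morphism out of $M$. Applying this to $M=\QQQ$ and using $M^{\odot n}(x,y)=\sqcup_{x_0=x,x_1,\ldots,x_n=y} M(x_0,x_1)\otimes\cdots\otimes M(x_{n-1},x_n)$, together with the observation that the length-zero summand contributes $\II$ on the diagonal and the absorbing initial object off it, recovers exactly the two displayed formulas with composition given by concatenation. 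Since all of this is natural in $S$ and leaves object sets unchanged, it assembles into a global functor $\Tfree$ left adjoint to $O$.

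Finally, for monadicity I would show directly that the comparison functor $\catoE\to(\mathsf{Quiv}_{\catE})^{T}$, where $T=O\Tfree$, is an isomorphism of categories, by inverting it explicitly. The unit axiom of a $T$-algebra structure map $h\colon T\QQQ\to\QQQ$ forces $h$ to be the identity on objects and to restrict to the identity on the length-one words; its length-zero and length-two components then furnish precisely the units $\II\to\QQQ(x,x)$ and a composition $\QQQ(x,y)\otimes\QQQ(y,z)\to\QQQ(x,z)$, and the associativity axiom of the algebra (with respect to the monad multiplication, which is flattening of words of words) unwinds exactly into the associativity and unit axioms of an $\catE$-category; conversely an $\catE$-category yields a $T$-algebra, and the two assignments are mutually inverse and natural. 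I expect the main obstacle to be precisely this last translation: verifying that the monad multiplication is concatenation and that the $T$-algebra axioms unwind, componentwise, into the enriched associativity and unit laws, the bookkeeping being the only genuinely non-formal step once the free monoid formula is in hand. (Alternatively one could invoke Beck's monadicity theorem, conservativity of $O$ being immediate since the inverse of an invertible functor is a functor, and creation of the relevant coequalizers following again from the colimit-preservation of $\odot$ granted by bilinearity.)
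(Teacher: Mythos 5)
Your proposal is correct; note that the paper itself offers no argument here (its proof reads, in full, ``Straightforward''), so there is nothing to compare against beyond observing that your route --- identifying the fibre of $\catoE$ over a fixed object set $S$ with monoids in $(\catE^{S\times S},\odot,I)$, invoking the classical free-monoid formula $\sqcup_{n\geq 0}M^{\odot n}$ (valid because bilinearity of $\otimes$ makes $\odot$ cocontinuous in each variable), and then checking monadicity either by a direct comparison or by Beck --- is the standard way to make ``straightforward'' precise, and all the individual steps go through. The one place where your write-up is thinner than the argument actually requires is the passage from the fixed-$S$ fibres to the full categories $\catoE$ and $\mathsf{Quiv}_{\catE}$: a morphism of quivers $\QQQ\to O\CCC$ may change the object set along a function $f\colon Ob(\QQQ)\to Ob(\CCC)$, so ``natural in $S$'' should be unpacked as: each $f$ induces a restriction functor $f^*\colon\catE^{S'\times S'}\to\catE^{S\times S}$ which is lax monoidal for the matrix products and commutes with the forgetful functors, whence morphisms $\QQQ\to O\CCC$ over $f$ correspond to morphisms $\QQQ\to O(f^*\CCC)$ in the fibre over $S$ and the fibrewise adjunctions glue. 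A cosmetic quibble: the initial object off the diagonal of $I$ is \emph{neutral} for the coproduct (it is absorbing for $\otimes$), which is why it drops out of the displayed formula for $x\neq y$. Neither point is a genuine gap.
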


\begin{proof}
Straightforward.
\end{proof}

\begin{lemma}
If the category $\catE$ is cocomplete, then the category  $\mathsf{Quiv}_{\catE}$ is cocomplete. Moreover, for any regular cardinal $\lambda$, if $\catE$ is a $\lambda$-presentable category, then, the category $\mathsf{Quiv}_{\catE}$ is $\lambda$-presentable.
\end{lemma}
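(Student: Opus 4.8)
The plan is to exploit the projection functor $Ob\colon \mathsf{Quiv}_{\catE} \to \Set$ sending a quiver to its set of objects, and to recognize it as a Grothendieck (bi)fibration. Indeed, the fiber over a set $S$ is the functor category $\catE^{S \times S}$ of families $\{\QQQ(x,y)\}_{(x,y) \in S \times S}$, and a morphism $F\colon \QQQ \to \QQQ'$ decomposes as the underlying function $f = Ob(F)\colon S \to T$ together with a morphism, in the fiber over $T$, from a pushforward of $\QQQ$ to $\QQQ'$. The reindexing along $f\colon S \to T$ is the precomposition functor $(f \times f)^*\colon \catE^{T \times T} \to \catE^{S\times S}$; since $\catE$ is cocomplete it admits a left adjoint $f_! = \mathrm{Lan}_{f\times f}$, given on objects by the coproduct formula $(f_!\QQQ)(t,t') = \sqcup_{x\in f^{-1}(t),\, y \in f^{-1}(t')} \QQQ(x,y)$ (here $S\times S$ and $T\times T$ are discrete). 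Hence $Ob$ is an opfibration with cocomplete fibers and cocontinuous reindexing.

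First I would prove cocompleteness. The base $\Set$ is cocomplete; each fiber $\catE^{S\times S}$ is cocomplete because its colimits are computed pointwise out of the cocomplete category $\catE$; and the pushforwards $f_!$, being left adjoints, preserve colimits. By the standard result that the total category of an opfibration over a cocomplete base, with cocomplete fibers and colimit-preserving reindexing, is itself cocomplete, the category $\mathsf{Quiv}_{\catE}$ is cocomplete. Concretely, the colimit of a diagram $D\colon I \to \mathsf{Quiv}_{\catE}$ has object set $S = \colim_I Ob\circ D$, and for $s,s' \in S$ its hom-object is the colimit, over the evident category of elements, of the objects $D(i)(x,y)$ with $x,y$ mapping to $s,s'$.

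For the presentability statement I would assume $\catE$ locally $\lambda$-presentable and produce a strong generator of $\lambda$-presentable objects. Write $\varnothing$ for the initial object of $\catE$ and fix a set $\mathcal G$ of $\lambda$-presentable objects that strongly generate $\catE$. Consider the quiver $P$ with a single object and hom-object $\varnothing$, and, for each $G \in \mathcal G$, the quiver $A_G$ with two objects $0,1$, with $A_G(0,1) = G$ and all other hom-objects equal to $\varnothing$. One computes $\Hom_{\mathsf{Quiv}_{\catE}}(P, \QQQ) = Ob(\QQQ)$ and $\Hom_{\mathsf{Quiv}_{\catE}}(A_G, \QQQ) = \sqcup_{(x,y)} \Hom_{\catE}(G, \QQQ(x,y))$. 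Using the description of $\lambda$-filtered colimits flowing from the opfibration structure above — where object sets are filtered colimits in $\Set$ and, once the objects are pushed to a common stage, hom-objects are the corresponding filtered colimits in $\catE$ — together with the $\lambda$-presentability of each $G$, these functors preserve $\lambda$-filtered colimits, so $P$ and all $A_G$ are $\lambda$-presentable.

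Finally I would check that $\{P\} \cup \{A_G : G \in \mathcal G\}$ is strongly generating, i.e. that the functors above are jointly conservative. If $F\colon \QQQ \to \QQQ'$ induces bijections on all these hom-sets, then bijectivity on $\Hom_{\mathsf{Quiv}_{\catE}}(P,-)$ forces $Ob(F)$ to be a bijection, and bijectivity on $\Hom_{\mathsf{Quiv}_{\catE}}(A_G,-)$ then forces, for each pair $(x,y)$, the map $\Hom_{\catE}(G, \QQQ(x,y)) \to \Hom_{\catE}(G, \QQQ'(Fx, Fy))$ to be a bijection for all $G \in \mathcal G$; since $\mathcal G$ strongly generates $\catE$, each component $F_{x,y}$ is an isomorphism, whence $F$ is an isomorphism. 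Being cocomplete and possessing a strong generator of $\lambda$-presentable objects, the category $\mathsf{Quiv}_{\catE}$ is locally $\lambda$-presentable by the standard characterization of locally presentable categories. I expect the main obstacle to be the careful identification of $\lambda$-filtered colimits in the total category $\mathsf{Quiv}_{\catE}$, which underlies both the $\lambda$-presentability of the generators and the pointwise colimit formula; everything else is formal once the opfibration $Ob$ is in place.
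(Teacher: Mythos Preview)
Your proposal is correct, and it is organized more systematically than the paper's own proof.

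For cocompleteness, the paper argues by hand: it notes that coproducts are straightforward and then writes down an explicit coequalizer of two parallel quiver maps $F,G:\QQQ\rightrightarrows\QQQ'$ (object set the coequalizer in $\Set$, and each hom-object the coequalizer in $\catE$ of the appropriate coproducts over fibers). Your approach via the Grothendieck opfibration $Ob:\mathsf{Quiv}_{\catE}\to\Set$ with fibers $\catE^{S\times S}$ and cocontinuous pushforwards $f_!$ is cleaner and recovers exactly that explicit formula as the special case of the general colimit recipe in a total category.

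For $\lambda$-presentability, the paper is extremely terse: it just asserts that $\mathsf{Quiv}_{\catE}$ is generated under $\lambda$-filtered colimits by quivers with $\lambda$-small object sets and $\lambda$-presentable hom-objects, and that these form a small set of isomorphism classes. Your argument is more explicit: you exhibit a much smaller strong generator $\{P\}\cup\{A_G: G\in\mathcal G\}$, compute the representable functors, and check $\lambda$-presentability and joint conservativity directly, invoking the strong-generator characterization of locally $\lambda$-presentable categories. The paper's choice of generators makes the ``every object is a $\lambda$-filtered colimit of generators'' step immediate; your choice trades that for an easy conservativity check but, as you correctly flag, requires the description of $\lambda$-filtered colimits in the total category. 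Both routes rest on the same underlying computation of colimits in $\mathsf{Quiv}_{\catE}$, which you have identified correctly.
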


\begin{proof}
 It is straightforward to prove that the category $\mathsf{Quiv}_{\catE}$ is stable under small coproducts. Let us prove that it has all cokernels. Consider the following diagram of $\catE$-quivers.
 \[
	\begin{tikzcd}[ampersand replacement=\&]
		\QQQ \arrow[r,shift left,bend left=25,"F"] \arrow[r,shift right,bend right=25,"G"'] \& \QQQ' 
	\end{tikzcd}
\]
The cokernel $\QQQ''$ of $F$ and $G$ is the following $\catE$-quiver:
\begin{itemize}
 \item[$\triangleright$]  its set of objects is the cokernel of the underlying functions of $F$ and $G$ from $Ob(\QQQ)$ to $Ob(\QQQ')$. Therefore, it is the quotient of the set $Ob(\QQQ'')$ by the relation $F(x)\sim G(x)$ for any object $x$ of $\QQQ$. Let us denote by $K$ the surjection from $Ob(\QQQ')$ to $Ob(\QQQ'')$. It is clear that at the level of objects of $\QQQ$, $KF=KG$.
 \item[$\triangleright$] For any objects $x,y \in Ob(\QQQ'')$, $\QQQ''(x,y)$ is the cokernel in $\catE$ of the following diagram.
  \[
	\begin{tikzcd}[ampersand replacement=\&]
		\sqcup_{KF(a)=x,KF(b)=y}\QQQ(a,b) \arrow[r,shift left,bend left=25,"F_{a,b}"] \arrow[r,shift right,bend right=25,"G_{a,b}"'] \& \sqcup_{K(x')=x,K(y')=y}\QQQ'(x',y') 
	\end{tikzcd}
\]
\end{itemize}
Besides, if $\catE$ is $\lambda$-presentable, then the category $\mathsf{Quiv}_{\catE}$ of $\catE$-quivers is generated under $\lambda$-filtered colimits by $\catE$-quivers $\QQQ$ whose sets of objects are $\lambda$-small and such that for any objects $x,y$, $\QQQ(x,y)$ is $\lambda$-small. The (possibly large) set of isomorphisms classes of such $\catE$-quivers is actually a small set.
\end{proof}

\begin{lemma}
Suppose that the monoidal category $(\catE,\otimes,\II)$ is bilinear. Then the category $\catoE$ has all filtered colimits and the forgetful functor $O$ preserves filtered colimits.
\end{lemma}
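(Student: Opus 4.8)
The plan is to exploit the monadicity of the adjunction $\Tfree \dashv O$ established above. Writing $T := O\Tfree$ for the induced monad on $\mathsf{Quiv}_{\catE}$, monadicity identifies $\catoE$ with the category of $T$-algebras $(\mathsf{Quiv}_{\catE})^T$, and $O$ with the forgetful functor $U \colon (\mathsf{Quiv}_{\catE})^T \to \mathsf{Quiv}_{\catE}$. I would then invoke the standard fact that $U$ creates every colimit preserved by both $T$ and $T^2$: given a diagram of $T$-algebras, one forms the underlying colimit in $\mathsf{Quiv}_{\catE}$, transports the algebra structure along the canonical map $\colim\, T \to T(\colim)$ turned into an isomorphism, and verifies the unit and associativity axioms, the latter requiring the analogous isomorphism for $T^2$. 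Since a composite of functors preserving filtered colimits again preserves them, it suffices to prove that $T$ preserves filtered colimits; then $T^2$ does too, and $U = O$ creates ---in particular preserves--- filtered colimits. As $\mathsf{Quiv}_{\catE}$ is cocomplete by the previous lemma (using that $\catE$ is bilinear, hence cocomplete), this simultaneously yields that $\catoE$ has all filtered colimits.

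It remains to show $T$ preserves filtered colimits, for which I would first make filtered colimits in $\mathsf{Quiv}_{\catE}$ explicit. Given a filtered diagram $i \mapsto \QQQ_i$, its colimit $\QQQ$ has object set $Ob(\QQQ) = \colim_i Ob(\QQQ_i)$ (a filtered colimit in $\Set$), and for $x, y \in Ob(\QQQ)$ represented by $x_i, y_i \in Ob(\QQQ_i)$ for $i$ large, one has $\QQQ(x,y) = \colim_i \QQQ_i(x_i, y_i)$, a filtered colimit in $\catE$ taken over the relevant coslice of the index category. This description follows from the coproduct and cokernel computations of the previous lemma, or can be checked directly.

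Then I would compute $T\QQQ(x,y) = \Tfree\QQQ(x,y)$ from its defining formula as a coproduct, over all lengths $n$ and all paths $z_0 = x, z_1, \ldots, z_n = y$ in $Ob(\QQQ)$, of the tensors $\QQQ(z_0,z_1) \otimes \cdots \otimes \QQQ(z_{n-1}, z_n)$ (plus a summand $\II$ when $x = y$), and compare it to $\colim_i \Tfree\QQQ_i(x_i, y_i)$. The identification rests on three commutation properties: filtered colimits commute with finite products in $\Set$, so for fixed $n$ the set of length-$n$ paths in $\colim_i Ob(\QQQ_i)$ is the filtered colimit of the sets of length-$n$ paths in the $\QQQ_i$; the bifunctor $\otimes$ commutes with filtered colimits separately in each variable by bilinearity, and the diagonal $I \to I^{\times k}$ is final for filtered $I$, so a $k$-fold tensor of filtered colimits is the filtered colimit of the $k$-fold tensors; and coproducts, being colimits, commute with filtered colimits. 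Combining these termwise in the path-sum yields the desired isomorphism $T\QQQ \simeq \colim_i T\QQQ_i$.

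The main obstacle ---really the only place demanding care--- is the bookkeeping of the two indexings: the coproduct defining $T$ ranges over paths of varying length with freely chosen intermediate objects, while the colimit ranges over the filtered index $I$, and one must interchange them. I would organize this by fixing the length $n$ and using that a length-$n$ path in the colimit is, by filteredness, realized by a compatible family of paths in the $\QQQ_i$ for $i$ large, then applying finality of the diagonal to reduce the multivariable tensor to a colimit over $I$ alone. A secondary subtlety is the extra identity summand $\II$ on the diagonal: when $x = y$ in $Ob(\QQQ)$ the chosen representatives $x_i, y_i$ need not be equal in each $\QQQ_i$, but since $Ob(\QQQ)$ is a filtered colimit of sets they coincide for $i$ large, so the $\II$ summand is present cofinally; as $\colim_i \II = \II$ over the connected index $I$, it contributes correctly and matches the diagonal formula for $T\QQQ$.
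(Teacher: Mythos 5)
Your proposal is correct, but it follows a genuinely different route from the paper's. The paper never invokes the monad in this proof: it takes the filtered diagram $D$, forms the colimit $\QQQ$ of the underlying quivers, and directly equips $\QQQ$ with a composition and a unit. Concretely, it writes $\QQQ(x,y)\otimes\QQQ(y,z)$ as a filtered colimit of the tensors $D(i)(x',y')\otimes D(i)(y',z')$ --- using exactly the two facts you isolate, namely that $\otimes$ commutes with colimits in each variable by bilinearity and that the diagonal of a filtered category is final --- and then pushes forward along the compositions of the $D(i)$; associativity and unitality are checked by the same finality arguments, and one verifies at the end that the resulting $\catE$-category is the colimit in $\catoE$. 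You instead outsource all of the structure-transport to the standard creation-of-colimits theorem for Eilenberg--Moore categories (legitimate here, since the paper's preceding lemma asserts monadicity of $\Tfree\dashv O$), reducing everything to the single claim that $T=O\circ\Tfree$ preserves filtered colimits, which you verify from the explicit path formula for $\Tfree$. This is a reasonable trade: you avoid the direct verification of associativity and unitality on the colimit (which the paper dispatches with ``similar arguments''), at the cost of the combinatorial bookkeeping of interchanging the path-indexed coproduct with the filtered colimit --- which you correctly flag as the only delicate point and resolve with the same diagonal-finality trick. As a bonus, your route makes explicit that the monad $O\circ\Tfree$ preserves filtered colimits, which is precisely the fact the paper invokes without further argument in the very next theorem to conclude that $\catoE$ is $\lambda$-presentable.
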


\begin{proof}
Let $D: I \to \catoE$ be a filtered diagram and let 
\[
	\QQQ = \colim_I\  O \circ D\ .
\]
We denote by $F(i)$ the morphism of $\catE$-quivers $D(i) \to \QQQ$ for any object $i \in I$.
The set of objects $Ob(\QQQ)$ is the colimit of the diagram $i \in I \mapsto Ob(D(i))$. 
Moreover, for any two of its element $x,y$,
\[
	\QQQ(x,y) = \colim_{(i,F(i)(x')=x,F(i) (y')=y) } D(i) (x',y') \simeq \colim_i (\sqcup_{(F(i)(x')=x,F(i) (y')=y)}  D(i) (x',y'))\ .
\]
Then, for any three objects $x,y,z$ of $\QQQ$,
\begin{align*}
 	\QQQ(x,y) \otimes \QQQ(y,z) &=\left( \colim_{(i,F(i)(x')=x,F(i) (y')=y) } D(i) (x',y')\right)
	\otimes \left( \colim_{(j,F(j)(y'')=y,F(j) (z'')=z) } D(j) (y'',z'') \right)\\
	&\simeq \colim_{(i,F(i)(x')=x,F(i) (y')=y, j,F(j)(y'')=y,F(j) (z'')=z)} D(i) (x',y') \otimes D(j)(y'',z'')\ .
\end{align*}
The inclusion functor $(i,F(i)(x')=x,F(i) (y')=y, F(i)(z')=z ) \mapsto (i,F(i)(x')=x,F(i) (y')=y, i,F(i)(y')=y,F(i) (z')=z)$ is final. Thus 
\[
	\QQQ(x,y) \otimes \QQQ(y,z) \simeq \colim_{(i,F(i)(x')=x,F(i) (y')=y, F(i)(z')=z )} D(i) (x',y') \otimes D(i)(y',z')\ .
\]
Besides, the following cocone
\[
	D(i) (x',y') \otimes D(i)(y',z') \xrightarrow{m_{x',y',z'}} D(i)(x',z') \xrightarrow{(F(i))_{x',y'}} \QQQ(x,y)\ ,
\]
induces a composition morphism $m_{x,y,z}^\QQQ:\QQQ(x,y) \otimes \QQQ(y,z) \to \QQQ(x,z)$. Moreover, the composite map
\[
	\II \xrightarrow{\eta_{x'}} D(i)(x',x') \xrightarrow{F(i)_{x',x'}} \QQQ(x,x)
\]
does not depend on the choice of $i$ and $x' \in Ob(D(i))$ such that $F(i)(x') =x$. Then, it defines a morphism $\eta_x: \II \to \QQQ(x,x)$.
Similar arguments about final diagrams as those used above show that
$\eta_x$ is a unit for the composition and that the composition is associative. We thus have defined the structure of an $\catE$-category on $\QQQ$. Finally, it is straightforward to prove that $\QQQ$ equipped with this structure is the colimit of the diagram $D$.
\end{proof}

\begin{theorem}\cite{KellyLack00}
Let $\lambda$ be regular cardinal. Suppose that the category $\catE$ is a $\lambda$-presentable bilinear monoidal category. Then the category $\catoE$ is 
$\lambda$-presentable.
\end{theorem}

\begin{proof}
It follows from the fact that the monad $O \circ \mathbb T$ preserves filtered colimits.
\end{proof}

\begin{definition}
We denote by $*_{\mathbb 1}$ the $\catE$-enriched category with one object $0$ such that
\[
	*_{\mathbb 1} (0,0) = \mathbb{1}_\catE\ .
\]
\end{definition}

\begin{definition}
 For any object $X$ of $\catE$, let $[1]_X$ be the $\catE$-category with two objects $0$ and $1$ such that
 \[
\begin{cases}
[1]_X(0,0)= [1]_X(1,1)= \II\ ,\\
[1]_X(0,1)= X\ ,\\
[1]_X(1,0)=\emptyset\ .
\end{cases}
 \]
 This defines a functor $[1]: \catE \to \catoE$.
\end{definition}

\subsection{Adjunction between categories of enriched categories}

Let $G$ be a lax monoidal functor from $(\catE, \otimes , \mathbb 1)$ to  $(\catF, \otimes , \mathbb 1)$. Since $G$ is monoidal one can define a functor from $\catoE$ to $\catoF$ also denoted $G$ such that for any $\catE$-category $(\CCC,m,u)$:
\begin{itemize}
 \item[$\triangleright$] $Ob\left(G(\CCC)\right) = Ob(\CCC)$,
 \item[$\triangleright$] $G(\CCC)(x,y) = G(\CCC(x,y))$,
 \item[$\triangleright$] the composition is defined as follows
 \[
 G(\CCC(x,y)) \otimes G(\CCC(y,z)) \rightarrow G(\CCC(x,y)\otimes \CCC(y,z)) \xrightarrow{G(m_{x,y,z})} G(\CCC(x,z))\ .
 \]
\end{itemize}

\begin{proposition}\label{prop:extendadj}
 Let $L\dashv R$ be a monoidal adjunction between two monoidal categories $(\catE, \otimes , \mathbb 1)$ and  $(\catF, \otimes , \mathbb 1)$. Then, the extended functor $L: \catoE \to \catoF$ is left adjoint to the extended functor $R: \catoF \to \catoE$.
\end{proposition}

\begin{proof}
For any $\catE$-category $(\CCC,m,u)$, the map $\eta_\CCC: \CCC \to RL(\CCC)$ is indeed a functor of $\catE$-categories. Similarly, for any $\catF$-category $(\CCC',m',u')$, the map $\epsilon_{\CCC'}:LR(\CCC') \to \CCC'$ is indeed a functor of $\catF$-categories. It is then straightforward to prove that the composite functors
\begin{align*}
 & L(\CCC) \xrightarrow{L(\eta_\CCC)} LRL(\CCC) \xrightarrow{\epsilon_{L(\CCC)}} L(\CCC)\ ,\\
 & R(\CCC') \xrightarrow{\eta_{R(\CCC')}} RLR(\CCC') \xrightarrow{R(\epsilon_{\CCC'})} R(\CCC')\ ,\\
\end{align*}
are respectively the identity of $L(\CCC)$ and the identity of $R(\CCC')$.
\end{proof}

\subsection{Monoidal model categories}

\begin{definition}[Monoidal model category]
Let $(\mathsf E, \otimes , \mathbb 1)$ be a (not necessarily symmetric) monoidal category equipped with a model structure. It is said to be a monoidal model category if
\begin{itemize}
 \item[$\triangleright$] the monoidal structure is bilinear,
 \item[$\triangleright$] the monoidal unit $\mathbb 1$ is cofibrant,
 \item[$\triangleright$] for any cofibrations $f: X \to X'$ and $g:Y \to Y'$, the morphism
 \[
 X' \otimes Y \sqcup_{X \otimes Y} X \otimes Y' \to X' \otimes Y'
 \]
 is a cofibration; moreover, this is a weak equivalence whenever either $f$ or $g$ is.
\end{itemize}
\end{definition}

\begin{remark}
Our definition is different from Hovey's original definition \cite[Definition 4.2.6]{Hovey99}. Indeed, we do not assume that the monoidal structure is closed,
but we assume that the monoidal unit is cofibrant, which is stronger than Hovey’s unit axiom.
\end{remark}

A consequence of K. Brown's lemma (a functor that sends acyclic cofibrations between cofibrant objects to weak equivalences
preserves weak equivalences between cofibrant objects) is that,
in a monoidal model category, for any cofibrant object $X$ and for any weak equivalence $Y \to Y'$ between cofibrant objects,
the morphism $X \otimes Y \to X \otimes Y'$ is a weak equivalence between cofibrant objects as well as the morphism $Y \otimes X \to Y' \otimes X$.
Then for any weak equivalence between cofibrant objects, the morphism $X^{\otimes n} \to Y^{\otimes n}$ is a weak equivalence between cofibrant objects for any integer $n \in \mbn$.

If $(\mathsf E, \otimes , \mathbb 1)$ is a monoidal model category, then we can define a tensor product on the homotopy category $Ho(\catE)$ as follows:
\[
	X \otimes_{Ho(\catE)} Y := \pi (QX \otimes QY)\ ,
\]
where $QX$ and $QY$ are cofibrant replacement of $X$ and $Y$ and $\pi$ is the localisation functor $\pi: \catE \to Ho(\catE)$.

\begin{proposition}\cite[Theorem 4.3.2]{Hovey99}
 This tensor products is part of a monoidal structure on the category $Ho(\catE)$. Moreover, the localisation functor $\pi: \catE \to Ho(\catE)$ is lax monoidal.
\end{proposition}

\begin{remark}
 Again, Hovey's theorem involves closed monoidal structure and imply results about closedness of the monoidal structure of the monoidal structure of
 $Ho(\catE)$. However, the proof of the part of the theorem that we recall does not use the closedness.
\end{remark}

\begin{definition}
 Let $(\catE,\otimes ,\II)$ and $(\catF,\otimes ,\II)$ be two monoidal model categories. A Quillen monoidal adjunction relating $\catE$ to $\catF$ is an adjunction
\[
	\begin{tikzcd}[ampersand replacement=\&]
		\catE \arrow[r,shift left,"L"] \& \catF \arrow[l,shift left,"R"]
	\end{tikzcd}
\]
which is both a Quillen adjunction and a monoidal adjunction.
\end{definition}

\begin{lemma}\label{lemmamonoidalquillenadj}
 Let us use the notations of the above definition. Then the left derived functor $\mathbb L L: Ho(\catE) \to Ho(\catF)$ is strong monoidal. Moreover, the canonical natural transformation from $\mathbb L L \circ \pi_{Ho(\catE)}$ to $\pi_{Ho(\catE)} \circ L$ is a monoidal natural transformation.
\end{lemma}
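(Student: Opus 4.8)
The plan is to unwind both sides of the desired comparison using the formula $\mathbb{L}L(\pi_\catE X) = \pi_\catF(LQX)$ for the left derived functor, where $Q$ denotes a cofibrant replacement functor, $\pi_\catE \colon \catE \to Ho(\catE)$ and $\pi_\catF \colon \catF \to Ho(\catF)$ denote the localisations, together with the description $X \otimes_{Ho(\catE)} Y = \pi_\catE(QX \otimes QY)$ of the tensor product on the homotopy category. First I would record three facts that allow superfluous cofibrant replacements to be deleted. Since $L$ is left Quillen it preserves cofibrant objects; since $\II$ is cofibrant and $-\otimes-$ is bilinear, the pushout-product axiom applied to $\emptyset \to QX$ and $\emptyset \to QY$ shows that $QX \otimes QY$ is again cofibrant; and by K. Brown's Lemma \ref{lemma:kbrown}, both $L$ and, by Proposition \ref{propmonoidalmodelcat}, the functors $A \otimes -$ and $- \otimes A$ with $A$ cofibrant send weak equivalences between cofibrant objects to weak equivalences.

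Next I would assemble the structural isomorphism. On one side $LQX$ and $LQY$ are cofibrant, so
\[
\mathbb{L}L(\pi_\catE X) \otimes_{Ho(\catF)} \mathbb{L}L(\pi_\catE Y) = \pi_\catF\!\left(Q(LQX) \otimes Q(LQY)\right) \simeq \pi_\catF(LQX \otimes LQY),
\]
the isomorphism being induced by the cofibrant-replacement weak equivalences $Q(LQX) \to LQX$ and $Q(LQY) \to LQY$ together with the three facts above. On the other side $QX \otimes QY$ is cofibrant, hence
\[
\mathbb{L}L\!\left(\pi_\catE X \otimes_{Ho(\catE)} \pi_\catE Y\right) = \pi_\catF\!\left(LQ(QX \otimes QY)\right) \simeq \pi_\catF\!\left(L(QX \otimes QY)\right),
\]
by K. Brown's lemma applied to $L$ and $Q(QX \otimes QY) \to QX \otimes QY$. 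The strong monoidal structure of $L$ provides $L(QX \otimes QY) \simeq LQX \otimes LQY$ in $\catF$, and applying $\pi_\catF$ identifies the two right-hand sides; composing yields the natural isomorphism witnessing that $\mathbb{L}L$ is strong monoidal. The unit is treated identically, using that $Q\II \to \II$ is a weak equivalence between cofibrant objects and that $L\II \simeq \II$.

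I would then deduce the coherence axioms (the associativity pentagon and the two unit triangles) from the corresponding coherences of the strong monoidal functor $L$: each diagram is obtained by applying the lax monoidal localisation $\pi_\catF$ to a diagram in $\catF$ that already commutes because $L$ is strong monoidal, the only extra input being naturality of the cofibrant-replacement weak equivalences. For the second assertion, the canonical natural transformation $\mathbb{L}L \circ \pi_\catE \Rightarrow \pi_\catF \circ L$ is the one with component $\pi_\catF(LQX) \xrightarrow{\pi_\catF(L q_X)} \pi_\catF(LX)$ induced by the replacement map $q_X \colon QX \to X$; to prove it monoidal I would check that the square
\[
\begin{tikzcd}[ampersand replacement=\&]
\mathbb{L}L(\pi_\catE X) \otimes \mathbb{L}L(\pi_\catE Y) \arrow[r] \arrow[d] \& \mathbb{L}L(\pi_\catE(X \otimes Y)) \arrow[d] \\
\pi_\catF(LX) \otimes \pi_\catF(LY) \arrow[r] \& \pi_\catF(L(X \otimes Y))
\end{tikzcd}
\]
commutes, which after the identifications above reduces to naturality of the strong monoidal structure map $LA \otimes LB \to L(A \otimes B)$ of $L$ evaluated at $q_X$ and $q_Y$.

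The main obstacle is entirely bookkeeping: one must verify that every comparison used is a weak equivalence \emph{between cofibrant objects}, so that Lemma \ref{lemma:kbrown} and Proposition \ref{propmonoidalmodelcat} genuinely apply, and that the resulting isomorphisms in $Ho(\catF)$ do not depend on the chosen replacements, so that the comparison is natural. Once this discipline is kept, the coherence diagrams commute formally, descending through $\pi_\catF$ from diagrams in $\catF$ that hold by the strong monoidality of $L$.
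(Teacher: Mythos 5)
The paper states Lemma \ref{lemmamonoidalquillenadj} without any proof at all: it is treated as a standard consequence of the theory of monoidal model categories (it is essentially Theorem 4.3.3 of \cite{Hovey99}, the companion to the Theorem 4.3.2 that the paper does cite for the monoidal structure on $Ho(\catE)$). So there is nothing in the paper to compare your argument against line by line; what you have written is a correct reconstruction of the standard proof. Your three preliminary facts are exactly the right ones, and the key reductions are sound: $Q(LQX)\to LQX$ is a weak equivalence between cofibrant objects because $L$, being left Quillen, preserves cofibrant objects, so Proposition \ref{propmonoidalmodelcat} deletes the outer replacement on one side; $QX\otimes QY$ is cofibrant by bilinearity plus the pushout-product axiom (note the cofibrancy of $\II$ plays no role in that particular step), so K.~Brown's lemma deletes the replacement on the other side; and the strong monoidal structure of $L$ glues the two sides together. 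The identification of the canonical transformation $\mathbb LL\circ\pi_\catE\Rightarrow\pi_\catF\circ L$ via $\pi_\catF(Lq_X)$ and the reduction of its monoidality to naturality of the structure map of $L$ at $q_X$, $q_Y$ is also the intended argument. The only places where your write-up is lighter than a complete proof are the ones you flag yourself: naturality of the composite isomorphism with respect to morphisms of $Ho(\catE)$ (which are only represented by zig-zags, so one must check independence of the chosen replacements), and the coherence pentagon/triangles, which do descend from $\catF$ but require tracking several replacement maps simultaneously rather than a single application of $\pi_\catF$. These are routine and standard, and acknowledging them as bookkeeping is fair.
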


\begin{proof}
The structure of a strong monoidal functor on the left derived functor is given by the isomorphism
 \[
 	\II_{Ho(\catF)} = \II_{\catF} \simeq L(\II_{\catE}) = \mathbb L L (\II_{Ho(\catE)})
 \]
and the following composite map
\begin{align*}
 	\mathbb L L (X) \otimes_{Ho(\catF)} \mathbb L L (Y) 
	&= L(QX) \otimes_{Ho(\catF)} L(QY) 
	\\
	&\simeq L(QX) \otimes_{\catF} L(QY)
	\\
	&\simeq L(QX \otimes_{\catF} QY)
	\\
	&=  \mathbb L L (X \otimes_{\catE} Y ).
\end{align*}
where the isomorphism $L(QX) \otimes_{Ho(\catF)} L(QY)  \simeq L(QX) \otimes_{\catF} L(QY)$ comes from the fact that both
$L(QX)$ and $L(QY)$ are cofibrant. A straightforward check shows that this defines the structure of a strong monoidal functor and the canonical natural transformation from $\mathbb L L \circ \pi_{Ho(\catE)}$ to $\pi_{Ho(\catE)} \circ L$ is a monoidal natural transformation.
\end{proof}

\subsection{Enriched, tensored an cotensored category}

In this section, we recall the definition of tensored-cotensored-enriched category over a not necessarly symmetric monoidal category.

\begin{definition}
Let $(\catE, \otimes , \mathbb 1)$ be a monoidal category . Any category $\CCC$ enriched over $\catE$
has an underlying category $S(\CCC)$ with the same object and so that
\[
	\hom_{S(\CCC)}(x,y) = \hom_{\catE}(\mathbb 1, \CCC(x,y)) .
\]
Then the mapping $(x,y) \in Ob(\CCC)\times Ob(\CCC) \mapsto \CCC(x,y) \in \catE$
becomes a functor
\[
	\{-,-\} : S(\CCC)^{op} \times S(\CCC) \to \catE . 
\]
\end{definition}

\begin{remark}
Actually, one can define a category enriched over $\catE$ as the data
of a category $\catC$ together with a functor $\{-,-\} : \catC^{op} \times \catC \to \catE$
together with a natural maps $\{X, Y\}\otimes  \{Y,Z\} \to \{X,Z\}$ and maps $\II_\catE \to \{X,X\}$
that make a unital associative composition and so that
\[
	\hom_{\catC}(x,y) = \hom_{\catE}(\mathbb 1, \{x,y\}) .
\]
\end{remark}

\begin{definition}[(Co)tensorisation]\label{defin:tensored} \label{def:tce}
  Let $(\catE, \otimes , \mathbb 1)$ be a monoidal category and let $\catC$ be a category
  enriched over $\catE$.
\begin{itemize}
\item[$\triangleright$] One says that $\catC$ is tensored over $\catE$ if the
functor 
\[
	Y \in \catC \mapsto \{X,Y\} \in \catE
\]
has a left adjoint that we denote $X \triangleleft -$
\item[$\triangleright$] One says that $\catC$ is cotensored over $\catE$ if the
functor
\[
	X \in \catC^{op} \mapsto \{X,Y\} \in \catE
\]
has a right adjoint that we denote $\langle -, Y \rangle$.
\end{itemize}
\end{definition}

If $\catC$ is tensored over $\catE$, then the construction $(X,A) \mapsto X \triangleleft \Aa$
is binatural. Thus, one gets a functor
 $$
 - \triangleleft - : \catC \times \catE \ra \catC
 $$
Moreover, the composition and the unit of the enrichment
induce functorial morphisms
 $$
 \begin{cases}
 X \triangleleft (\Aa \otimes \BB)  \to (X \triangleleft \Aa ) \triangleleft \BB \ ,\\
X \triangleleft \II  \simeq X\ ,
\end{cases}
 $$
for any $X \in \catC$, any $\Aa, \BB \in \catE$; these functors are compatible with the monoidal structure of $\catE$ in the sense that the following diagrams are commutative
 $$
 \xymatrix{X \triangleleft \big( (\Aa \otimes \BB ) \otimes \CC \big) \ar[r] \ar[d] 
 &  \big(X \triangleleft (\Aa \otimes \BB )\big) \triangleleft \CC  \ar[r] 
 & \big( (X \triangleleft \Aa) \triangleleft \BB \big) \triangleleft \CC \\     
   X \triangleleft \big( \Aa \otimes ( \BB  \otimes \CC ) \big) \ar[rr] 
   && (X \triangleleft \Aa) \triangleleft  (\BB  \otimes \CC ) \ , \ar[u]}
   $$
   
   $$
 \xymatrix{ X \triangleleft (\II \otimes \Aa )  \ar[rd] \ar[rr] 
 && (X \triangleleft \II) \triangleleft \Aa \ar[ld] \\
 & X \triangleleft \Aa\ ,}
 $$
 
    $$
 \xymatrix{ X \triangleleft (\Aa \otimes \II )  \ar[rd] \ar[rr] 
 && (X \triangleleft \Aa) \triangleleft \II \ar[ld] \\
 & X \triangleleft \Aa\ .}
 $$

Similarly, if $\catC$ is cotensored over $\catE$, then the construction $(\Aa,Y) \mapsto
\langle \Aa, Y \rangle$ is binatural. Thus, one gets a functor
 $$
 \langle -, - \rangle: \catE^{op} \times \catC \ra \catC .
 $$
Moreover, the composition and the unit of the enrichment
induce functorial isomorphisms
 $$
 \begin{cases}
  \langle \Aa \langle \BB, X \rangle \rangle\to\langle \Aa \otimes \BB, X \rangle \ ,\\
\langle \II, X \rangle  \simeq X\ .
\end{cases}
 $$
such that the duals of the above diagrams are commutative.

Now, consider two monoidal categories $(\catE, \otimes ,\II)$ and $(\catF, \otimes ,\II)$ and a monoidal adjunction
\[
\begin{tikzcd}
\catE \arrow[r, shift left, "L"] &  \catF\ . \arrow[l, shift left, "R"]
\end{tikzcd}
\]

Let $\CCC$ be a category enriched over $\catF$ that is tensored (resp. cotensored). Then
the category enriched over $\catF$ $R(\CCC)$ is also tensored (resp. cotensored).
The bifunctor associated to the tensoring is $-\triangleleft L (-)$ and the bifunctor associated to the cotensoring is $\langle L(-),-\rangle$.

\subsection{Model category enriched over a monoidal model category}

In this subsection $(\catE, \otimes ,\II)$ is a monoidal model category.

\begin{definition}[Homotopical enrichment]\label{defin:almosthomotop}\leavevmode
Let $\mathsf M$ be a model category. We say that  $\mathsf M$ is homotopically enriched over $\catE$ if it enriched over $\catE$ and if for any cofibration $f: X \ra X'$ in $\mathsf M$ and any fibration $g : Y \ra Y'$ in $\mathsf M$, the morphism in $\catE$:
 $$
 \{X',Y\} \ra \{X',Y\} \times_{\{X,Y'\}} \{X,Y\}
 $$ 
 is a fibration. Moreover, we require this morphism to be a weak equivalence whenever $f$ or $g$ is a weak equivalence.
\end{definition}

\begin{definition}
 A $\catE$-model category is a model category $\catM$ tensored-cotensored-enriched over $\catE$ such that the enrichment of $\catM$ over $\catE$ is homotopical.
\end{definition}

One can show that the enrichment is homotopical if for any cofibration $f: X \ra Y$ in $\mathsf M$ and any cofibration $g : A \ra B$ in $\mathsf E$, the morphism in $\catM$:
 \[
 X \triangleleft B \sqcup_{X \triangleleft A} Y \triangleleft A \to Y \triangleleft B
 \] 
 is a cofibration, and it is a weak equivalence whenever $f$ or $g$ is a weak equivalence.

\begin{proposition}\label{prop:changehomotopicalenrich}
 Consider a monoidal Quillen adjunction $\catE \dashv \catF$. If $\catM$ is an $\catF$-model category, then it has a canonical structure of a $\catE$-model category.
\end{proposition}

\begin{proof}
 We already know that $\catM$ is tensored-cotensored-enriched over $\catE$. The enrichment is homotopical because the functor $\catE \to \catF$ is a left Quillen functor.
\end{proof}
\poubelle{
\subsection{Biclosed monoidal categories}

\begin{definition}[Biclosed monoidal category]
 A bilinear monoidal category $(\catE, \otimes , \mathbb 1)$ is biclosed if for any object $X$ of $\catE$, the functor $-\otimes X$ and the functor $X \otimes - $ have right adjoints.
\end{definition}

Let $(\catE, \otimes , \mathbb 1)$ be such a biclosed monoidal category. Then, for any object $X$ of $\catE$, let us denote by $[X,-]$ the right adjoint of $-\otimes X$ and let us denote by $\{X,-\}$ the right adjoint of $X\otimes -$.

\begin{lemma}
 The construction $X, Y \mapsto [X,Y]$ and $X,Y \mapsto \{X,Y\}$ defines both functors from $\catE^{op} \times \catE$ to $\catE$.
\end{lemma}

\begin{proof}
 Consider a morphism $f:X \to X'$. For any object $Y$, we have a composite morphism
 \[
 [X',Y] \otimes X \xrightarrow{Id \otimes f}  [X',Y] \otimes X' \to Y\ ,
 \]
 which induces a map $[X',Y] \to [X,Y]$ that we denote $[f,Id]$. Therefore the construction $X \mapsto [X,Y]$ is contravariantly functorial. It is straightforward to prove that the functoriality with respect to $X$ and the functoriality with respect to $Y$ commutes in the sense that we obtain a from $\catE^{op} \times \catE$ to $\catE$.
\end{proof}

\begin{proposition}
 A bilinear monoidal category $(\catE, \otimes , \mathbb 1)$ is biclosed if and only if it is tensored-cotensored-enriched over itself.
\end{proposition}

\begin{proof}
 Straightforward from the definitions.
\end{proof}

\begin{proposition}
 Let $(\catE, \otimes , \mathbb 1)$ be a bilinear monoidal category. Suppose that $\catE$ is presentable. Then, it is biclosed.
\end{proposition}

\begin{proof}
It follows from the left adjoint functor theorem.
\end{proof}

\begin{remark}
 A symmetric monoidal model category is closed if and only if it is biclosed.
\end{remark}
}
\subsection{Dwyer-Kan model structure}

This section recalls model structures on categories enriched  over a monoidal model category $\catE$ in the vein of \cite[A.3.1]{Lurie09}. There are other results related to this subject. See for instance \cite{BergerMoerdijk13}, \cite{Caviglia14}.\\

Let $(\catE,\otimes, \II)$ be a monoidal model category.

\begin{definition}
 Let $\pi_0$ be the following composite functor
 \[
 \catE \xrightarrow{\pi} Ho(\catE) \xrightarrow{S} \Set\ .
 \]
 where $Ho(\catE)$ is the homotopy category of $\catE$ and $S$ is the Yoneda functor $\hom_{Ho(\catE)}(\II, -)$. Since  the localization functor $\catE \to Ho(\catE)$ and $S: Ho(\catE) \to \Set$ are both lax monoidal, then $\pi_0$ is lax monoidal.
\end{definition}

\begin{definition}
If it exists, the Dwyer-Kan model structure on the category $\mathsf{Cat}_{\catE}$ is the model structure such that
 \begin{itemize}
 \item[$\triangleright$] the weak equivalences are the functors $F$ from $(\CCC, \mu, u)$ to $(\CCC', \mu', u')$ such that $F_{x,y}: \CCC(x,y) \to \CCC'(F(x),F(y))$ is a weak equivalence of $\catE$ for any $x,y \in Ob(\CCC)$, and such that $\pi_0 (F): \pi_0 (\CCC) \to \pi_0(\CCC')$ is an essentially surjective functor.
 \item[$\triangleright$] the (large) set of cofibrations is the smallest subset of the set of functors which is stable under pushouts, transfinite compositions and retracts and which contains the functor $\emptyset \to *_{\mathbb 1}$ and, for any cofibration $f: X \to Y$ of $\catE$, the functor $[1]_f: [1]_X \to [1]_Y$.
\end{itemize}
\end{definition}

\begin{theorem}(\cite[A.3.2.4]{Lurie09})\label{thm:modelstructure}
 Suppose that $(\catE,\otimes, \II)$ is a combinatorial monoidal model category such that
every object is cofibrant and such that weak equivalences are stable under filtered colimits. Then, the category $\catoE$ admits the Dwyer-Kan model structure which is moreover left proper and combinatorial. A set of generating cofibration is
 \[
 	\left\{ \emptyset \to *_{\mathbb 1} \right\} \sqcup \left\{  [1]_f |\ f \text{ is a generating cofibration of }\catE \right\}\ .
 \]
\end{theorem}

\begin{remark}
 Lurie assumed $\catE$ to be symmetric monoidal in his result. The proof does not relies on this assumption. Actually, this result a consequence (after some work) of the theory of combinatorial model categories ; see for instance \cite{Rosicky09}.
\end{remark}

Consider a monoidal Quillen adjunction between monoidal model categories.
\[
\begin{tikzcd}
\catE \arrow[r, shift left, "L"] &  \catF\ . \arrow[l, shift left, "R"]
\end{tikzcd}
\]

\begin{proposition}
 Suppose that the category $\catoE$ and $\mathsf{Cat}_{\catF}$ have Dwyer-Kan model structures. Suppose moreover that the functor $L: \catE \to \catF$ preserves weak equivalences (for instance if any object of $\catE$ is cofibrant). Then the adjunction
 \[
\begin{tikzcd}
\catoE \arrow[r, shift left, "L"] &  \catoF\ . \arrow[l, shift left, "R"]
\end{tikzcd}
\]
is a Quillen adjunction.
\end{proposition}

\begin{proof}
It is straightforward to prove that $R: \catoF \to \catoE$ preserves acyclic fibrations. So $L: \catoE \to \catoF$ preserves cofibrations. Let us prove that it preserves weak equivalences. Let $F: (\CCC,\gamma) \to (\DDD,\gamma)$ be weak equivalence of $\catoE$. Since the functor $L: \catE \to \catF$ preserves weak equivalences, then the map $L(F): (L \CCC)(x,y) \to (L \DDD)(F(x),F(y))$ is a weak equivalence for any objects $x,y\in Ob(\CCC)$. Besides, the functors $\pi_0: \catE \to \Set$ and $\pi_0 \circ L : \catE \to\Set$ are lax monoidal and there exists a monoidal natural transformation between them (Lemma \ref{lemmamonoidalquillenadj}). We thus obtain a natural transformation between the functor $\pi_0: \catoE \to \mathsf{Cat}$ and the functor $\pi_0 \circ L : \catoE \to \mathsf{Cat}$. So, we have the following commuting square diagram of categories
 \[
\begin{tikzcd}
	\pi_0 (\CCC) \ar[r] \ar[d,"\pi_0(F)"]
	& \pi_0 L \CCC \ar[d,"\pi_0 L(F)"] \\
	\pi_0 \DDD \ar[r]
	&\pi_0 L\DDD\ ,
\end{tikzcd}
\]
whose horizontal arrows are isomorphisms on objects. Since $\pi_0 F$ is essentially surjective, then $\pi_0 L F$ is also essentially surjective.
\end{proof}

%%%%%%%%%%%%%%%%%%%%%%%%%%%%%%%%%%%%%%%%%%%%%%%%%%%%%%%%%%%%%%%%

\section{Cubical sets}

There are many different categories called cubical sets. See for instance \cite{GrandisMauri03}, and \cite{Isaacson09}. In this paper we just focus on two of these categories that we call precubical sets
and cubical sets with connections. The first one is described in \cite[\S 8.3]{Cisinski06}, in \cite{Jardine02} and in \cite{Jardine06} and the second one in \cite{Maltsiniotis09} and \cite[Example 1.6]{Cisinski14}. 

We deal with cubical sets with connections for the following reason. It has the minimal structure
that make usual nerve functors from quasi categories to enriched categories factorise through
categories enriched over cubical sets with connections.

\subsection{Segments}

We give here the notions of a segment and of an interval inspired from \cite{BergerMoerdijk06}. Note that a segment (resp. an interval) in the sense of  \cite{BergerMoerdijk06} is a monoidal segment (resp. monoidal interval) for us.

\begin{definition}
 Let $(\catE,\otimes,\II)$ be a monoidal category. A segment of $\catE$ is an object $H$ of $\catE$ together with maps
 \[
 \II \sqcup \II \xrightarrow{(\delta^0_H ,\delta^1_H)} H \xrightarrow{\sigma_H} \II
 \]
 which factorizes the morphism $ \II \sqcup \II \to \II$. Moreover, in a monoidal model category, an interval is a segment such that the map $(\delta^0_H ,\delta^1_H)$ is a cofibration and the map $\sigma_H$ is a weak equivalence.
\end{definition}

Notice that one can also define a segment as a functor $F: \Delta_{\leq 1}\to \catE$ such that $F(\Delta_0) = \II$.

\begin{definition}
Let $(\catE,\otimes,\II)$ be a monoidal category. A monoidal segment $(H,\delta^0_H,\delta^1_H,\sigma_H,\gamma_H)$ is a segment
\[
\II \sqcup \II \xrightarrow{(\delta^0_H, \delta^1_H)} H \xrightarrow{\sigma_H} \II\ ,
\]
together with a map $\gamma_H: H \otimes H \to H$ such that 
\begin{itemize}
 \item[$\triangleright$] the product $\gamma_H$ is associative, that is $\gamma_H (Id_H \otimes \gamma_H)=\gamma_H (\gamma_H\otimes Id_H)$,
 \item[$\triangleright$] the product has a unit given by $\delta^0_H: \II \to H$,
 \item[$\triangleright$] the morphism $\sigma_H$ is a morphism of monoids,
 \item[$\triangleright$] the morphism $\delta^1_H: \II \to H$ is absorbing, that is the following diagram commutes
  \[
	\begin{tikzcd}[ampersand replacement=\&]
		H \simeq \II \otimes H \arrow[r,"\delta^1_H \otimes Id" ] \arrow[d, "\sigma_H"']
		\& H \otimes H \arrow[d, "\gamma_H"]
		\& H \simeq H \otimes \II \arrow[l,"Id \otimes \delta^1_H"'] \arrow[d,"\sigma_H"] \\
		\II \arrow[r,"\delta^1_H"'] 
		\& H
		\& \II \arrow[l,"\delta^1_H"]\ .
	\end{tikzcd}
\]

\end{itemize}
A morphism of monoidal segments from $(H,\delta^0_H,\delta^1_H,\sigma_H,\gamma_H)$ to $(H',\delta^0_{H'},\delta^1_{H'},\sigma_{H'},\gamma_{H'})$ is a morphism of segment $f: H \to H'$ such that $f\gamma _H = \gamma_{H'} (f\otimes f)$. In a monoidal model category, a monoidal segment which is also an interval is called a monoidal interval.
\end{definition}

\begin{remark}
One also can deal with segments $H$ with a monoidal structure so that $\delta^0_H$ is absorbing instead
of $\delta^1_H$. This would yield another category of cubes called cubes with a left connections, while our category of cubes with connections is actually the category of cubes with right connections.
\end{remark}

\begin{remark}
One can add many different structures to segments. Any type of structured segment yields
a category of cubes which is the PRO category associated to that structured segment, and then
a category of cubical sets which are presheaves on the category of cubes. See again \cite{GrandisMauri03}.
\end{remark}

\subsection{Cubes}

This subsection deals with two categories of cubes : the category of precubes and
the categogry of cubes with connections.\\

For any integers $n \in \mbn$, we denote by $\square^n$ the $n$-times product of the poset $\{0 < 1 \}$
\[
\square^n := \{0 < 1\}^{n}\ .
\]

Notice that the full subcategory of the category of sets spanned by the objects $\square^n$ has a symmetric monoidal structure given by the cartesian product. Consider the following functions
\[
\begin{cases}
\delta^0: \square^0 \to \square^1\ ,\\
\delta^1: \square^0 \to \square^1\ ,\\
\sigma: \square^1 \to \square^0\ ,\\
\gamma : \square^2 \to \square^1\ ,
\end{cases}
\]
defined by 
\[
\begin{cases}
\delta^0(*) = 0\ ,\\
\delta^1 (*) = 1\ ,\\
\sigma (i) = *\ \forall\ i\in \square^1\ ,\\
\gamma (i,j)  = \mathrm{max} (i,j)\ \forall\ i,j\in \square^1\ .
\end{cases}
\]
Tensoring $\delta^0$, $\delta^1$, $\sigma$ and $\gamma$ with identities, one obtains the following maps
\[
\begin{cases}
 \delta^{0}_i= Id^{i} \times \delta^0 \times Id^{n-i}: \square^n \simeq \square^{i} \times \square^0 \times \square^{n-i}  \to \square^{n+1}\ ,\\
 \delta^{1}_i= Id^{i} \times \delta_1 \times Id^{n-i}: \square^n \simeq \square^{i} \times \square^0 \times \square^{n-i}  \to \square^{n+1}\ ,\\
 \sigma_{i}= Id^{i} \times \sigma \times Id^{n-i-1}: \square^{n} \simeq \square^{i} \times \square^1 \times \square^{n-i-1} \to \square^{n-1}\ ,\\
 \gamma_i = Id^{\times i} \times \gamma \times Id^{\times n-i-1} : \square^n \simeq \square^i \times \square^2 \times \square^{n-i-2} \to \square^{n-1} \ ,
\end{cases}
\]
for any $n$. The maps $\delta^{0}_i$ and $\delta^{1}_i$ are called cofaces, the maps $\sigma_i$ are called codegeneracies and the maps $\gamma^i_n$ are called connections. These functions satisfy the following relations
\[
\begin{cases}
 \delta^{\epsilon}_i \delta^{\epsilon'}_j =  \delta^{\epsilon'}_{j+1}   \delta^{\epsilon}_i\text{, if } i \leq j\ ,\\
 \sigma_{i} \sigma_{j}=  \sigma_{j}   \sigma_{i+1}\text{, if }i \geq j\ ,\\
 \sigma_i \delta^{\epsilon}_j =  \delta^{\epsilon}_{j-1} \sigma_{i} \text{, if }i<j\ ,\\
 \sigma_{i}   \delta^{\epsilon}_i=Id\ ,\\ 
 \sigma_i \delta^{\epsilon}_j =\delta^{\epsilon}_{j} \sigma_{i-1} \text{, if }i>j\ ,\\
 \gamma_i \gamma_i = \gamma_i \gamma_{i+1}\ ,\\
 \gamma_i \gamma_j = \gamma_{i+1} \gamma_j \text{, if }i>j\ ,\\  
\sigma_i \gamma_i  = \sigma_i \sigma_i\ ,\\
\sigma_i \gamma_j = \gamma_{j-1} \sigma_i \text{, if } i <j\ ,\\
\sigma_i \gamma_j = \gamma_j \sigma_{i+1} \text{, if } i >j\ ,\\
\gamma_i \delta^0_i = \gamma_i \delta^0_{i+1} =Id\ ,\\
\gamma_i \delta^1_i = \gamma_i \delta^1_{i+1} = \delta^1_i \sigma_i\ ,\\
\gamma_i \delta^\epsilon_j = \delta^\epsilon_{j-1} \gamma_i \text{, if } i+1 <j\ ,\\
\gamma_i \delta^\epsilon_j =   \delta^\epsilon_j \gamma_{i-1} \text{, if } i >j\ .
\end{cases}
\]
 
\begin{definition}\label{defin:precubes}
 The category of precubes $\square_p$ is the subcategory of posets whose objects are the posets $\square^n$ for $n \in \mbn$ and whose morphisms are generated by the cofaces $\delta^{0}_i$ and $\delta^{1}_i$ and the codegeneracies $\sigma_i$.  The category of cubes with connections $\square_c$ is the subcategory of posets whose objects are the posets $\square^n$ for $n \in \mbn$ and whose morphisms are generated by the cofaces $\delta^{0}_i$ and $\delta^{1}_i$, the codegeneracies $\sigma_i$ and the connections $\gamma_i$.
\end{definition}

\begin{remark}
The term connections was introduced in \cite{BrownHiggins81}.
\end{remark}

\begin{remark}
 Beware! The category of cubes with connections that we consider is not the same as the category considered by \cite{GrandisMauri03}, but it is the category considered in \cite{Maltsiniotis09}.
\end{remark}

The rewriting rules given above give us the following proposition.

\begin{proposition}\cite{GrandisMauri03}\label{prop:rewriting}
 Any morphism in the category $\square_p$ may be uniquely written as a sequence
 \[
 \delta^{\epsilon_1}_{i_1} \cdots  \delta^{\epsilon_n}_{i_n} \sigma_{j_1} \cdots \sigma_{j_m}\ ,
 \]
 where $i_1 > \cdots > i_i$ and $j_1 < \cdots < j_m$. Similarly, any morphism in the category $\square_c$ may be uniquely written as a sequence
 \[
 \delta^{\epsilon_1}_{i_1} \cdots  \delta^{\epsilon_n}_{i_n} \gamma_{j_1} \cdots \gamma_{j_m} \sigma_{k_1} \cdots \sigma_{k_l}\ ,
 \]
 where $i_1 > \cdots > i_i$, $j_1 \leq \cdots \leq j_m$ and $k_1 < \cdots < k_l$.
\end{proposition}

\begin{proposition}
 Both the category of precubes and the category of cubes with connections inherit a strict monoidal structure from the cartesian product of posets. In both cases, the unit is $\square^0$ and
 \[
 \square^n \otimes \square^m = \square^{n+m}\ ,
 \]
 for any integers $n,m.$
\end{proposition}

\begin{proof}
The proposition follows from long but straightforward checkings.
\end{proof}

\begin{remark}
The monoidal category $(\square_p, \otimes , \square^0)$ is \textit{not} symmetric monoidal. Indeed, the following diagram does not commute
 \[
	\begin{tikzcd}[ampersand replacement=\&]
		\square^1
		\arrow[r, "="]
		\arrow[d, "="']
		\& \square^0 \otimes \square^1 \arrow[d, "\delta^0 \otimes Id"] \\
		\square^1 \otimes \square^0 \arrow[r,"Id \otimes \delta^0"'] \& \square^1 \otimes \square^1 = \square^2\ .
	\end{tikzcd}
\]
For the same reason, the monoidal category $(\square_c,\otimes ,\square^0)$ is not symmetric monoidal.
\end{remark}

\begin{proposition}\cite[Proposition 8.4.6]{Cisinski06}, \cite[Proposition 5.5]{Maltsiniotis09}\label{prop:extensiontocubes}
Let $(\catC, \otimes , \mathbb 1)$ be a monoidal category. The category of strong monoidal functors from $\square_p$ to $\catC$ and monoidal natural transformations is equivalent to the category of segments of $\catC$. Similarly, the category of strong monoidal functors from $\square_c$ to $\catC$ and monoidal natural transformations is equivalent to the category of monoidal segments of $\catC$.
\end{proposition}

\begin{remark}
Actually Maltsiniotis shows the above results for strict monoidal categories
and strict monoidal functors. But the same arguments work.
\end{remark}

\subsection{Cubical sets}

\begin{definition}
 The category of precubical sets $\crSet$ is the category of presheaves on $\square_p$, that is functors from $\square_p^{op}$ to $\Set$. The category of cubical sets with connections $\ccSet$ is the category of presheaves on $\square_c$, that is functors from $\square_c^{op}$ to $\Set$.
\end{definition}

\begin{notation}\ 
\begin{itemize}
 \item[$\triangleright$]  We will denote by $\square_p[n]$ (resp. $\square_c[n]$) the Yoneda embedding of $\square^n$ in the category $\crSet$ (resp. $\ccSet$).
 \item[$\triangleright$] For any precubical set or any cubical set with connections $X$ and any integer $n\in \mbn$, $X(n)$ will denote the set $X(\square^n)$.
\end{itemize}
\end{notation}

\begin{example}
We will often deal with the following cubical sets.
\begin{itemize}
 \item[$\triangleright$] We denote by $\partial \square_p[n]$ the union of all the faces of $\square_p[n]$, that is
 \[
 \partial \square_p[n] (\square^m) = \{ f\in \hom_{\square_p} (\square^m , \square^n) | \text{ there is a factorisation } f=\delta_{n-1}^{\epsilon,i} g\}\ .
 \]
 \item[$\triangleright$] For any $(\epsilon,i)\in \{0,1\} \times \{1,\ldots,n\}$, let $\sqcap^{\epsilon,i}[n]$ be the $i$-cap of $\partial \square_p[n]$, that is
 \[
 \sqcap_p^{\epsilon,i}[n] (\square^m) = \{ f \in \hom_{\square_p} (\square^m , \square^n) | \text{ there is a factorisation } f=\delta_{n-1}^{\epsilon',i'} g \text{ with } (\epsilon',i') \neq (\epsilon,i)\}\ .
 \]

 \item[$\triangleright$] One can define in a similar way the cubical sets with connections $\partial \square_c[n]$ and $ \sqcap_c^{\epsilon,i}[n]$.
\end{itemize}
 \end{example}
 
 \begin{remark}
 Since the categories $\crSet$ and $\ccSet$ are presentable and since the bifunctor $- \otimes -$ commutes with colimits on both sides, then one can show that the monoidal categories $\crSet$ and $\ccSet$ are biclosed, that is the functor $-\otimes X$ and the functor $X \otimes - $ have both right adjoints.
\end{remark}

\begin{proposition}\cite[8.4.23]{Cisinski06}\label{prop:extension}
 Let $(\catC, \otimes , \mathbb 1)$ be a monoidal cocomplete bilinear category. The category of cocontinuous strong monoidal functors from $\crSet$ to $\catC$ with monoidal natural transformations is equivalent to the category of segments of $\catC$. Similarly, the category of cocontinuous strong monoidal functors from $\ccSet$ to $\catC$ with monoidal natural transformations is equivalent to the category of monoidal segments of $\catC$.
\end{proposition}

\begin{proof}
 It is a straightforward consequence of Proposition \ref{prop:extensiontocubes}.
\end{proof}

Moreover, any functor $L_p : \crSet \to \catC$ which is cocontinuous has a right adjoint. Then, any adjunction
\[
\begin{tikzcd}
\crSet \arrow[r, shift left, "L_p"] &  \catC\ , \arrow[l, shift left, "R_p"]
\end{tikzcd}
\] 
is essentially determined by the image under the functor $L_p$ of $\square_p[1]$. The same result holds if we replace the category $\crSet$ by the category $\ccSet$.

\begin{notation}
 Let $(\catC, \otimes , \II)$ be a bilinear monoidal category. Consider a segment $H$ in $\catC$. The adjunction relating $\catC$ to precubical sets induced by this segment will be denoted $L^H_p \dashv R^H_p$. If $H$ is a monoidal segment, the induced adjunction relating $\catC$ to cubical sets with connections  will be denoted $L^H_c\dashv R^H_c$.
\end{notation}

\begin{remark}
 Notice that for any monoidal segment $H$,
 \[
 L^H_c \circ   L^{\square_c[1]}_p \simeq L^H_p\ .
 \]
\end{remark}

\begin{lemma}\cite[Lemme 8.4.36]{Cisinski06}
 For any integer $n \in \mbn$ and for any integers $i,j\in \mbn$ such that $i+j=n$, we have
 \[
 \partial \square_p[n] \simeq \partial \square_p[i] \otimes \square_p[j] \sqcup_{\partial \square_p[i] \otimes \partial \square_p[j]} \square_p[i] \otimes \partial \square_p[j]\ .
 \]
 Moreover,
\begin{align*}
 \sqcap_p^{\epsilon,n}[n] &\simeq \partial \square_p[n-1] \otimes \square_p[1] \sqcup_{ \partial \square_p[n-1] \otimes \square_p[0]} \square_p[n-1] \otimes \square_p[0]\\
  \sqcap_p^{\epsilon,1}[n] &\simeq \square_p[1] \otimes \partial \square_p[n-1] \sqcup_{ \square_p[0] \otimes \partial \square_p[n-1]} \square_p[0] \otimes \square_p[n-1]\\
  \sqcap_p^{\epsilon,i}[n] &\simeq \sqcap_p^{\epsilon,i}[j] \otimes \square_p[j] \sqcup_{\sqcap_p^{\epsilon,i}[i] \otimes \partial \square_p[j]}  \square_p[i]\otimes \partial \square_p[j]\ .
\end{align*}
 The same results hold in the category of cubical sets with connections.
\end{lemma}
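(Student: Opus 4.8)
The plan is to read every isomorphism in the statement as an instance of the \emph{Leibniz (pushout--product) construction}, and to reduce the whole lemma to an elementary objectwise computation of Day tensor products. By the Day product (under which the Yoneda embedding is strong monoidal) one has $\square_r[i]\otimes\square_r[j]\simeq\square_r[i+j]=\square_r[n]$. Under this identification the right-hand side of the first formula is exactly the source of the pushout--product of the two boundary inclusions $\partial\square_r[i]\hookrightarrow\square_r[i]$ and $\partial\square_r[j]\hookrightarrow\square_r[j]$, so the asserted isomorphism is precisely the claim that this pushout--product is the boundary inclusion $\partial\square_r[n]\hookrightarrow\square_r[n]$. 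Thus it suffices to prove that, as subobjects of $\square_r[n]$,
\[
\partial\square_r[i]\otimes\square_r[j]\ \cup\ \square_r[i]\otimes\partial\square_r[j]=\partial\square_r[n]\ ,\qquad \partial\square_r[i]\otimes\square_r[j]\ \cap\ \square_r[i]\otimes\partial\square_r[j]=\partial\square_r[i]\otimes\partial\square_r[j]\ .
\]
Indeed, $\crSet$ is a presheaf category, so monomorphisms, intersections (pullbacks) and unions (images) are computed objectwise; evaluating at each $\square^m$ reduces the pushout claim to the elementary fact that in $\Set$ the union $A\cup B$ of two subsets is the pushout of $A\leftarrow A\cap B\to B$.

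The technical heart, and the step I expect to be the main obstacle, is the explicit identification of the subobject $\partial\square_r[i]\otimes\square_r[j]\hookrightarrow\square_r[n]$. Using the normal form of Proposition \ref{prop:rewriting}, an element of $\square_r[n](\square^m)=\hom_{\square_r}(\square^m,\square^n)$ is a ``cube word'' assigning to each of the $n$ output coordinates either a constant in $\{0,1\}$ or one of the input coordinates (the latter used injectively and in increasing order); coordinate $k$ factors through a face exactly when its value is a constant, so $\partial\square_r[n](\square^m)$ is the set of words possessing at least one constant coordinate. I claim that $\partial\square_r[i]\otimes\square_r[j]$ is the subobject of those words whose restriction to the first block $\{1,\ldots,i\}$ already contains a constant, and symmetrically that $\square_r[i]\otimes\partial\square_r[j]$ is cut out by the second block $\{i+1,\ldots,n\}$. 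The subtlety is that $-\otimes\square_r[j]$ is only a left adjoint, hence a priori need not preserve the monomorphism $\partial\square_r[i]\hookrightarrow\square_r[i]$; one therefore genuinely has to verify, via the colimit (coend) formula for the Day convolution together with the normal form, that the induced map is injective at each level and has precisely the stated image.

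Granting this identification, the union of the two subobjects is the set of words with a constant somewhere, namely $\partial\square_r[n]$, while their intersection is the set of words with a constant in each block, namely $\partial\square_r[i]\otimes\partial\square_r[j]$ (the same block description applied to $\partial\square_r[i]\otimes\partial\square_r[j]$). The two displayed identities follow, and with them the first isomorphism.

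The three cap formulas are proved by the same method, the only change being that one of the boundary factors is replaced by a single face or a lower cap. For $\sqcap_r^{\epsilon,n}[n]$ the distinguished direction lies in a one-dimensional second block $\square_r[1]$, whose faces are $\delta^0,\delta^1$; omitting $(\epsilon,n)$ leaves the single opposite face $\square_r[0]\hookrightarrow\square_r[1]$, so $\sqcap_r^{\epsilon,n}[n]$ appears as the pushout--product of $\partial\square_r[n-1]\hookrightarrow\square_r[n-1]$ with that face inclusion, which is exactly the stated pushout. The case $\sqcap_r^{\epsilon,1}[n]$ is symmetric, the distinguished direction now sitting in the first block. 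For a general direction one splits $\square_r[n]$ into the block containing the omitted direction and its complement, using a lower cap on the former and a full boundary on the latter; the argument is identical and only the bookkeeping of which face is suppressed changes. Finally, the statement for cubical sets with connections is proved verbatim: the Day product and the objectwise reduction are unchanged, and although the normal form of Proposition \ref{prop:rewriting} now also involves the connections $\gamma_i$, the boundary is still detected by the presence of a constant coordinate, so the block description of $\partial\square_c[i]\otimes\square_c[j]$ goes through unaltered.
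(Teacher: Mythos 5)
The paper does not prove this lemma at all: it is quoted verbatim from Cisinski with the citation \cite[Lemme 8.4.36]{Cisinski06}, so there is no in-paper argument to compare yours against. On its own terms, your plan is sound and is essentially the standard proof: read each isomorphism as the assertion that a pushout--product of boundary/face/cap inclusions is again a boundary or cap inclusion, reduce to an objectwise statement about subobjects of $\square_r[n]$ via the normal form of Proposition \ref{prop:rewriting}, and use that in a presheaf category the union of two subobjects is the pushout over their intersection. You also correctly isolate the one genuinely nontrivial point, namely that $\partial\square_r[i]\otimes\square_r[j]\to\square_r[n]$ is a monomorphism with the expected image; the cleanest way to discharge it, consistent with your sketch, is to write $\partial\square_r[i]$ as the union of the $2i$ face subobjects $F^{\epsilon}_k\simeq\square_r[i-1]$ (each a split mono via $\sigma_k\delta^{\epsilon}_k=\mathrm{Id}$), note that tensoring this colimit diagram with the representable $\square_r[j]$ yields representables mapping monomorphically onto the subobjects ``coordinate $k$ of the first block is constant $\epsilon$'', and check that pairwise intersections are sent to intersections --- which is exactly what your cube-word computation verifies. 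Two small bookkeeping points you should make explicit: in the cap formulas the map $\square_r[0]\to\square_r[1]$ must be the face $\delta^{\bar\epsilon}$ opposite to the omitted one (otherwise the union is the wrong cap), and the first tensor factor in the displayed general cap formula should read $\sqcap_r^{\epsilon,i}[i]$ rather than $\sqcap_r^{\epsilon,i}[j]$ (a typo already present in the statement). For the version with connections, your claim that the block description is unaltered is correct but deserves one more sentence: in the normal form $\delta\gamma\sigma$ each output coordinate is either a constant or a max of a block of input coordinates, and factoring through $\delta^{\epsilon}_k$ is still equivalent to the $k$-th output being constantly $\epsilon$, so the same subobject calculus applies.
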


\poubelle{
\subsection{Skeleton and coskeleton}

\begin{definition}
 For any integer $n\in \mbn$, let $\square^{\leq n}$ be the full subcategory of the category $\square$ of cubes spanned by the objects $(\square^k)_{k \leq n}$. We denote by $\square^{\leq n}-\mathsf{set}$ the category of presheaves on $\square^{\leq n}$.
\end{definition}

The inclusion functor $\square^{\leq n} \hookrightarrow \square$ induces a restriction functor $r_n:\cSet \to \square^{\leq n}-\mathsf{set}$. This restriction functor preserves both colimits and limits. By the adjoint functor theorem it has both a left and a right adjoint.

\begin{definition}
 The $n^{th}$ coskeleton $\mathrm{cosk}_n$ (resp. the $n^{th}$ skeleton $\mathrm{sk}_n$) is the left (resp. right) adjoint functor of the functor $r_n$.
\end{definition}

\begin{lemma}\label{lemma:cosk}
 For any integers $n \leq m$, then $\mathrm{cosk}_m(\square[n]) \simeq \square[n]$.
\end{lemma}

\begin{proof}
We have
\begin{align*}
 \hom_{\cSet}(\square[n], X) &\simeq X_n \simeq \left(r_m(X)\right)_n\\ & \simeq \hom_{\square^{\leq m}-\mathsf{set}}(\square[n], r_m(X))\\
&\simeq   \hom_{\cSet}(\mathrm{cosk}_m(\square[n]), X)\ ,
\end{align*}
for any cubical set $X$. So, by the Yoneda lemma, $\mathrm{cosk}_m(\square[n]) \simeq \square[n]$.
\end{proof}

It is well known that such functors $r_n$, $\mathrm{cosk}_n$ and $\mathrm{sk}_n$ exist in the context of simplicial sets.}

%%%%%%%%%%%%%%%%%%%%%%%%%%%%%%%%%%%%%%%%%%%%%%%%%%%%%%%%%%%%%%%%

\section{Homotopy theory of cubical sets}

This section deals with the homotopy theories of precubical sets and of cubical sets with connections. We first recall their model structures from the work of Cisinski, Jardine and Maltsiniotis and give additional results on the model structure on cubical sets with connections. We then study the Reedy model structure on cocubical objects and its link to functors from cubical sets (precubical or with connections) to any other monoidal model category.

\subsection{Homotopy theory of precubical sets}
We know that the unit of the cartesian monoidal structure on simplicial sets is the final object $\Delta[0]$. Moreover, the map $\Delta[0] \sqcup  \Delta[0] \to \Delta[0]$ may be factorised as follows
\[
\Delta[0] \sqcup \Delta[0] \xrightarrow{\delta^0 \sqcup \delta^1} \Delta[1] \xrightarrow{\sigma} \Delta[0]\ .
\]
By Proposition \ref{prop:extension}, this induces a strong monoidal cocontinuous functor $L^{\Delta[1]}_p: \crSet \to \sSet$ and hence a monoidal adjunction between precubical sets and simplicial sets
 \[
\begin{tikzcd}
\crSet \arrow[r, shift left, "L^{\Delta[1]}_p"] &  \sSet \arrow[l, shift left, "R^{\Delta[1]}_p"]\ .
\end{tikzcd}
\]
One can transfer the usual model structure on simplicial sets to precubical sets.

\begin{theorem}\cite{Cisinski06}\cite{Jardine02}
\label{thm: after cisinski jardine}
 There exists a combinatorial proper model structure on precubical sets such that
\begin{itemize}
\item[$\triangleright$] the cofibrations are the monomorphisms,
\item[$\triangleright$] the weak equivalences are the morphisms $f$ such that $L_p^{\Delta[1]} (f)$ is a weak equivalence,
\item[$\triangleright$] the generating cofibrations are the injections $\partial(\square)[n] \hookrightarrow \square[n]$,
\item[$\triangleright$] the generating acyclic cofibrations are the injections $\sqcap^{\epsilon,i}[n] \hookrightarrow \square[n]$,
\item[$\triangleright$] the adjunction $L_p^{\Delta[1]} \dashv R_p^{\Delta[1]}$ is a Quillen equivalence,
\item[$\triangleright$] $(\crSet,\otimes ,\square^0)$ is a monoidal model category,
\item[$\triangleright$] the functor $R_p^{\Delta[1]}$ preserves and reflects weak equivalences.
\end{itemize}
\end{theorem}

\begin{remark}
 Jardine and Cisinski described independently two model structures on the category of cubical sets ; see \cite{Jardine02} and \cite{Cisinski06}. They are actually the same (see \cite{Jardine06}). Besides, the fact that this model structure is right proper is a direct consequence of Cisinski's theory. However, this property seems hard to prove in Jardine's framework which is more topological.
\end{remark}

\subsection{Homotopy theory of cubical sets with connections}

\begin{theorem}(\cite[Proposition 3.3]{Maltsiniotis09} and \cite[Theorem 1.7]{Cisinski14})\label{thm:aftercisinski}
The category $\square_c$ is a test category. Hence, the category $\ccSet$ admits a model structure
whose cofibrations are monomorphisms and whose weak equivalences are maps $f:X \to Y$
 such that the map
\[
	N(\square_c/X) \to N(\square_c/Y) 
\] 
is a weak equivalence of simplicial sets for the Kan-Quillen model structure
and so that the functor from $\ccSet$ to $\sSet$ that sends $X$ to $N(\square_c/X)$
is an equivalence at the level of homotopy categories.
Moreover, the model structure is monoidal, combinatorial and proper and
\begin{itemize}
 \item[$\triangleright$] a set of generating cofibrations is given by the maps
 \[
 \{ \partial \square_c[n]  \to \square_c[n] |n \in \mbn\}\ .
 \] 
 \item[$\triangleright$] a set of generating acyclic cofibrations is given by the maps
 \[
 \{ \sqcap^{i,\epsilon}_c[n]  \to \square_c[n] |n \in \mbn\}\ .
 \] 
\end{itemize}
\end{theorem}

\begin{remark}
Actually, Maltsiniotis showed that $\square_c$ is a strict test
 category.
\end{remark}

\subsection{The Reedy model structure on cocubical objects}

In this subsection and until the end of this section (except the last subsection "Cubical sets with connections and cubical sets"), what we describe holds in the category of precubical sets and in the category of cubical sets with connections. Therefore, we will not use the indices $c$ or $r$ but use the notation $\square$ and talk about cubical sets. 

\begin{proposition}\label{prop: reedy struct}
 The category $\square$ has a Reedy structure such that
\begin{itemize}
 \item[$\triangleright$] the degree of $\square^n$ is $n$,
 \item[$\triangleright$] the degree raising morphisms are the composites of cofaces,
 \item[$\triangleright$] the degree lowering morphisms are the composites of codegeneracies (or the composites of codegeneracies and connections).
\end{itemize}
\end{proposition}

Let $\mathsf{E}$ be a model category. We know that the category $\mathrm{Fun}(\square, \mathsf E)$ of cocubical objects in $\mathsf{E}$ is equivalent to the category $\mathrm{Fun}_{cc}(\cSet, \mathsf E)$ of functors from cubical sets to $\mathsf E$ which preserve colimits. Therefore, we will often assimilate a cocubical object to such a functor. We know that  a cocubical object $F$ also induces a functor $F^!$ from $\mathsf E$ to cubical sets defined by 
\[
F^!(X):= \hom_{\catE} (F(-), X)\ ,
\]
and which is right adjoint to $F_!$. Besides a map $F\to G$ induces a natural transformation $G^! \to F^!$.\\

We can endow the category $\mathrm{Fun}(\square, \mathsf E)$ of cocubical objects in $\mathsf{E}$ with the Reedy model structure (see for instance \cite[Theorem 5.2.5]{Hovey99}), that is,
\begin{itemize}
 \item[$\triangleright$] the weak equivalences are the morphisms $F\to G$ such that $F(\square[n]) \to G(\square[n])$ is a weak equivalence in $\mathsf E$ for any $n \in \mbn$,
  \item[$\triangleright$] the cofibrations (resp. acyclic cofibrations) are the morphisms $F\to G$ such that  
 \[
F(\square[n]) \sqcup_{F(\partial \square [n])}  G(\partial \square [n]) \to G(\square[n])
 \]
is a cofibration (resp. an acyclic cofibration) in $\mathsf E$ for any $n \in \mbn$.
\end{itemize}

\begin{proposition}\label{prop:reedy2}
 Let $F \to G$ be a Reedy cofibration of cocubical objects of $\catE$ and let $p: X \to Y$ be a cofibration of $\catE$. Then if one these two maps is also a weak equivalence, then the morphism
 \[
G^!(X) \to G^!(Y) \times_{F^!(Y)} F^!(X)
 \]
 is an acyclic fibration.
\end{proposition}

Equivalently, for any Reedy cofibration $F\to G$ and for any cofibration of cubical sets $A \to B$, the morphism in $\catE$
\[
 F(B) \sqcup_{F(A)} G(A) \to G(B) 
 \]
is a cofibration and it is an acyclic cofibration if  $F \to G$ is acyclic. In particular, a Reedy cofibrant functor $F$ preserves cofibrations.

\begin{proof}[Proof of Proposition \ref{prop:reedy2}]
 Consider a square diagram as follows
 \[
\begin{tikzcd}
 \partial (\square[n])  \arrow[r] \arrow[d] & G^!(X) \arrow[d]\\
 \square[n] \arrow[r] & G^!(Y) \times_{F^!(Y)} F^!(X)\ .
\end{tikzcd}
 \]
 It induces another square diagram in $\catE$
\[
\begin{tikzcd}
 F(n) \sqcup_{\partial F(n)} \partial G(n)  \arrow[r] \arrow[d] & X \arrow[d]\\
 G(n) \arrow[r] & Y\ .
\end{tikzcd}
 \]
This square diagram has a lifting because one of the vertical maps is a weak equivalence. So the first square diagram has also a lifting.
\end{proof}

\begin{corollary}\label{cor:reedy}
 Let $F$ and $G$ be two Reedy cofibrant cocubical objects of $\mathsf E$ and consider a weak equivalence $F\to G$. Then, for any cubical set $A$, the morphism $F_!(A) \to G_! (A)$ is a weak equivalence and for any fibrant object $X$ of $\catE$, the morphism $G^!(X) \to F^!(X) $ is a weak equivalence.
\end{corollary}

\begin{proof}[Proof of Corollary \ref{cor:reedy}]
It is a straightforward consequence of K. Brown's lemma (functors that preserves acyclic cofibrations
preserve weak equivalences between cofibrant objects) and Proposition \ref{prop:reedy2}.
\end{proof}

\subsection{Intervals and Quillen adjunctions}

Let $(\catE,\otimes , \mathbb 1)$ be a monoidal model category. Let us choose a segment $\II \sqcup \II \to H \to \II$ (or a monoidal segment in the case of cubical sets with connections). We know that such a segment induces a monoidal adjunction $L^H \dashv R^H$ relating cubical sets to $\catE$.

\begin{proposition}\label{prop:interval}
 The adjunction $L^H \dashv R^H$ is a Quillen adjunction if and only if $H$ is an interval, that is the morphism $\II \sqcup \II \to H$ is a cofibration and the morphism $H \to \II$ is a weak equivalence.
% Suppose that the segment $H$ is an interval. Then, the functor $L^H$ preserves acyclic cofibrations. 
\end{proposition}

\begin{remark}
This is a cubical analogue of \cite[Prop. A.13]{BergerMoerdijk06}.
\end{remark}
%Equivalently, if $H$ is an interval, then $R^H$ preserves fibrations. 

\begin{lemma}\label{lemma:lhreedy}
 The functor $L^H$ preserves cofibrations if and only if the map $\II \sqcup \II \to H$ is a cofibration.% Then, the cubical object of $\catE$ induced by $L^H$ is Reedy cofibrant. Equivalently, $L^H$ preserves cofibrations.
\end{lemma}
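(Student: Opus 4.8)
The plan is to use that $L^H$ is a cocontinuous strong monoidal functor determined by the segment, so that $L^H(\square[0]) \simeq \II$ and $L^H(\square[1]) = H$, and more generally $L^H(\square[n]) \simeq H^{\otimes n}$. The two implications then come out of this description together with the decomposition of $\partial\square[n]$ recalled above.

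For the ``only if'' direction I would simply evaluate $L^H$ on the generating cofibration $\partial\square[1] \hookrightarrow \square[1]$. Since $\partial\square[1] \simeq \square[0] \sqcup \square[0]$ and $L^H$ preserves coproducts and the monoidal unit, this map becomes $\II \sqcup \II \xrightarrow{(\delta^0_H,\delta^1_H)} H$. Hence if $L^H$ preserves cofibrations, the segment map $\II \sqcup \II \to H$ is a cofibration.

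For the ``if'' direction, first reduce to generating cofibrations: the cofibrations of cubical sets form the smallest class containing the boundary inclusions $\partial\square[n] \hookrightarrow \square[n]$ and closed under pushouts, transfinite compositions and retracts, and $L^H$ is cocontinuous and preserves retracts, so it suffices to check that $L^H(\partial\square[n] \to \square[n])$ is a cofibration of $\catE$ for every $n$. I would prove this by induction on $n$. The base case $n=0$ is $\emptyset \to \II$, a cofibration because $\II$ is cofibrant in the monoidal model category $\catE$. For the inductive step $n\geq 1$, I would apply \cite[Lemme 8.4.36]{Cisinski06} with $i=n-1$, $j=1$:
\[
\partial\square[n] \simeq \partial\square[n-1]\otimes\square[1] \sqcup_{\partial\square[n-1]\otimes\partial\square[1]} \square[n-1]\otimes\partial\square[1],
\]
which exhibits $\partial\square[n] \to \square[n] = \square[n-1]\otimes\square[1]$ as the pushout-product of $\partial\square[n-1]\to\square[n-1]$ and $\partial\square[1]\to\square[1]$. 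Since $L^H$ is strong monoidal and cocontinuous, it carries this to the pushout-product in $\catE$ of $L^H(\partial\square[n-1]\to\square[n-1])$ and $L^H(\partial\square[1]\to\square[1]) = (\II\sqcup\II\to H)$. By the inductive hypothesis the former is a cofibration, and the latter is one by assumption, so the pushout-product axiom of $\catE$ gives that $L^H(\partial\square[n]\to\square[n])$ is a cofibration.

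The main obstacle, and really the only nontrivial point, is the claim that $L^H$ sends the cubical pushout-product to the pushout-product in $\catE$; this is where strong monoidality (to identify $L^H(X\otimes Y)$ with $L^H(X)\otimes L^H(Y)$ compatibly with the structure maps) and cocontinuity (to commute $L^H$ past the defining pushout) are simultaneously used. Granting this, the argument is a clean induction, and since the decomposition of $\partial\square[n]$ holds in both settings, the same proof applies verbatim to reduced cubical sets and to cubical sets with connections.
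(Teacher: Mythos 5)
Your proof is correct and follows essentially the same route as the paper: evaluate on $\partial\square[1]\hookrightarrow\square[1]$ for the ``only if'' direction, and for the converse induct on $n$ using the decomposition of $\partial\square[n]$ as a pushout-product together with the pushout-product axiom of $\catE$. The only (immaterial) difference is that you factor $\square[n]$ as $\square[n-1]\otimes\square[1]$ where the paper uses $\square[1]\otimes\square[n]$, and you spell out the reduction to generating cofibrations that the paper leaves implicit.
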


\begin{proof}
If $L^H$ preserves cofibrations, then the map
\[
	\II \sqcup \II \simeq L^H (\partial \square[1]) \to  L^H (\square[1]) \simeq  H
\]
is a cofibration. Conversely, suppose that the map $\II \sqcup \II \to H$ is a cofibration. Let us prove by induction that, for any integer $n \in \mbn$, the map
\[
L^H (\partial \square [n]) \to L^H (\square [n])
\]
is a cofibration. Since $\catE$ is a monoidal model category, the map $\emptyset \to \II$ is a cofibration. So the result holds for $n=0$. Suppose that it holds for some integer $n$. Then the morphism
 \[
 L^H (\partial \square[n+1] ) \simeq (\II \sqcup \II) \otimes L^H (\square[n]) \sqcup_{(\II \sqcup \II) \otimes L^H (\partial \square [n])} H \otimes L^H (\partial \square [n]) \to H \otimes  L^H (\square [n]) \simeq L^H(\square[n+1])
 \]
 is a cofibration. So the result holds at the stage $n+1$.
\end{proof}

\begin{proof}[Proof of Proposition \ref{prop:interval}]
Suppose that $H$ is an interval and let us prove that $L^H$ is left Quillen. We already know from Lemma \ref{lemma:lhreedy} that it preserves cofibrations. So it suffices to show that it preserves acyclic cofibrations. Let $n$ be an integer, let $0\leq i \leq n$ and let $\epsilon\in \{0,1\}$. We will denote the opposite sign of $\epsilon$ by $\ov \epsilon$. Then $L^H(\sqcap^{i,\epsilon}[i])$ is the colimit of the following diagram
  \[
\begin{tikzcd}[ampersand replacement=\&]
	L^H(\partial(\square[i-1])) \otimes \II \arrow[r]\arrow[d]  \&L^H(\partial(\square[i-1])) \otimes H\\
	L^H(\square[i-1]) \otimes \II
\end{tikzcd}
 \]
 Since $L^H(\partial(\square[i-1]))$ is cofibrant by Lemma \ref{lemma:lhreedy}, and since the map $\II\to H$ is an acyclic cofibration, then the morphism $L^H(\partial(\square[i-1])) \otimes \II \to L^H(\partial(\square[i-1])) \otimes H$ is an acyclic cofibration.
 So, the map $L^H(\square[i-1]) \to L^H(\sqcap^{i,\epsilon}[i])$ is also an acyclic cofibration. Besides, since $H^{\otimes i-1}$ is cofibrant and since the map $\II\to H$ is an acyclic cofibration, then the map
 \[
 \delta_i^{i, \ov \epsilon}:L^H(\square[i-1]) \simeq H^{\otimes i-1} \otimes \II  \rightarrow H^{\otimes i} \simeq  L^H(\square[i])
 \]
 is a weak equivalence. So the map $L^H(\sqcap^{i,\epsilon}[i]) \to L^H(\square[i])$ is a weak equivalence. It is even an acyclic cofibration since the map $\sqcap^{i,\epsilon}[i] \to\square[i]$ is a cofibration and since $L^H$ preserves cofibrations. Besides, $L^H(\sqcap^{i,\epsilon}[n])$ is the colimit of the following diagram
  \[
\begin{tikzcd}[ampersand replacement=\&]
	L^H(\sqcap^{i,\epsilon}[i]) \otimes L^H(\partial(\square[n-i])) \arrow[r,hook,"\sim"]\arrow[d,hook]  \&L^H(\square[i]) \otimes L^H(\partial(\square[n-i]))\\
	L^H(\sqcap^{i,\epsilon}[i]) \otimes L^H(\square[n-i])\ .
\end{tikzcd}
 \]
 Using the same arguments as in the paragraph just above, we can prove that the map $L^H(\sqcap^{i,\epsilon}[n]) \to L^H (\square[n])$ is an acyclic cofibration. So $L^H$ preserves acyclic cofibrations. Therefore, it is a left Quillen functor. The converse implication is straightforward.
\end{proof}

Consider a morphism if intervals $H \to H'$. By Proposition \ref{prop:extension}, it induces a unique natural transformation $L^H \to L^{H'}$. Thus, we obtain a natural transformation $R^{H'} \to R^H$.

\begin{proposition}\label{prop:changeint}
Consider a morphism of interval $H \to H'$. This is in particular a weak equivalence. Then, for any fibrant object $X$ on $\mathsf{E}$, the morphism
$
R^{H'}(X) \to R^H(X)
$
is a weak equivalence. Moreover, for any cubical set $X$, the morphism $L^{H}(X) \to L^{H'}(X)$ is a weak equivalence.
\end{proposition}

\begin{proof}[Proof of Proposition \ref{prop:changeint}]
By Proposition \ref{prop:interval}, we know that the functor $L^H$ and $L^{H'}$ are both Reedy cofibrant. Moreover, the map $L^H \to L^{H'}$ is an equivalence. We conclude by Corollary \ref{cor:reedy}.
\end{proof}

\begin{corollary}\label{corquilleneq}
The two following propositions are equivalent. 
\begin{enumerate}
 \item There exists an interval $H$ of $\catE$ such that the adjunction $L^H\dashv R^H$ is a Quillen equivalence.
 \item For any interval $H$ of $\catE$, the adjunction $L^H\dashv R^H$ is a Quillen equivalence.
\end{enumerate}
\end{corollary}

\begin{proof}
 The second statement implies the first one since there exists intervals. Besides, suppose that the first statement is true, and let $H'$ be an other interval. There exists a sequence of weak equivalences of intervals as follows
 \[
 \begin{tikzcd}
	H \arrow[r,"\sim"] & H''  & H' \arrow[l,"\sim"']\ .
\end{tikzcd}
 \]
This induces a sequence of natural transformations of functors from cubical sets to $\text{Ho}(\catE)$
\[
 \begin{tikzcd}
	L^H \arrow[r] & L^{H''}  & L^{H'} \arrow[l]\ .
\end{tikzcd}	
\]
By Proposition \ref{prop:changeint}, these natural transformations are isomorphisms. So $L^{H'}$ is an equivalence of categories as well as $L^H$.
\end{proof}

\subsection{Two ways from cubical sets and simplicial sets}

From a cubical set $X$, one can produce a simplicial set in many ways. We are interested in the constructions $X \mapsto N(\square/ X)$ and  $X \mapsto L^{\Delta[1]} X$. The goal of this subsection is to show that they are naturally equivalent. result.

\begin{definition}
Let $Sd : \square_c \to \sSet$ be the functor that sends $\square[n]$ to the nerve of its category of subobjects in $\square_c$. This induces and adjunction $Sd_! \dashv Sd^\ast$ relating cubical sets to simplicial sets.
\end{definition}

\begin{definition}
 Let $\mathrm{cospan}$ be the following interval in simplicial sets
$$
\mathrm{cospan} = N(\{0 \to 2 \leftarrow 1\}).
$$
whose structure maps $\delta^0, \delta^1: \ast \to \mathrm{cospan}$ have image respectively $0$ and $1$.
\end{definition}

\begin{lemma}
We have a canonical isomorphism
$$
Sd_! \simeq L^{\mathrm{cospan}} .
$$
\end{lemma}

\begin{proof}
One can notice that monomorphisms are compositions of cofaces (both for $\square_p$ and $\square_c$). This implies that the functor $S_d$ is monoidal. Thus, one gets a canonical isomorphism
$$
Sd_! \simeq L^{Sd(\square[1])} .
$$
To conclude, one has also a canonical isomorphism of interval
$$
Sd(\square[1]) \simeq \mathrm{cospan}.
$$
Hence, one gets natural isomorphisms
$$
Sd_! \simeq L^{Sd(\square[1])} \simeq L^{\mathrm{cospan}}.
$$
\end{proof}

\begin{lemma}\cite[Corollaire 3.2]{Cisinski06}
The functor $X \mapsto N(\square / X)$ preserves colimits.
\end{lemma}

\begin{definition}
Let us denote $G: \square \to \sSet$ the functor
$$
\square[n] \mapsto G(\square[n]) = N(\square / \square[n]) .
$$
Then, one has a natural isomorphism
$G_!(X) \simeq N(\square / X)$.
\end{definition}

\begin{definition}
Let us denote $Im$ the natural transformation $G \to Sd$ so that $Im(\square[n])$ is the morphism of simplicial sets induced by the functor 
$$
\square / \square[n] \to \text{Sub-objects}(\square[n])
$$
that sends a morphism $\phi : \square[m] \to \square[n]$ to its image.
\end{definition}

\begin{definition}
Let us denote $Tr$ the monoidal natural transformation $Sd_! = L^{\mathrm{cospan}} \to L^{\Delta}$ induced by the morphism of interval
$$
\mathrm{cospan} \to \Delta[1]
$$
induced by the functor that sends $1, 2$ to $1$.
\end{definition}

\begin{lemma}\label{lemmareedycof}
The cocubical object $G \in Fun{\square}{\sSet}$ is Reedy cofibrant.
\end{lemma}

\begin{proof}
It suffices to notice that the functor $X \mapsto N(\square / X)$ preserves monomorphisms.
\end{proof}

\begin{proposition}\label{lemmachsdid}\label{propositionnateq}
The naturalation $Im_! \circ Tr_! : G_! \to L^{\Delta[1]}$ is an objectwise weak equivalence.
\end{proposition}

\begin{proof}
We known that both cubical simplicial sets $G$ and $L^{\Delta[1]}$ are Reedy cofibrant (by Lemma \ref{lemmareedycof} and Proposition \ref{prop:interval}). Moreover, for any cube $\square[n]$, the map $Tr \circ Im(\square[n])$ is a weak equivalence since both $G(\square[n])$ and $L^{\Delta[1]}(\square[n])$ are contractible.
We conclude by Corollary \ref{cor:reedy}.
\end{proof}

\subsection{Cubical sets are equivalent to simplicial sets}

Consider the following sequence of adjunctions
\[
\begin{tikzcd}
\crSet \arrow[r, shift left, "L^{\square_c[1]}_p"] &  \square_c -\mathsf{Set} \arrow[l, shift left, "R^{\square_c[1]}_p"]\arrow[r, shift left, "L^{\Delta[1]}_c"] &  \sSet \arrow[l, shift left, "R^{\Delta[1]}_c"]\ .
\end{tikzcd}
\]

\begin{proposition}\label{prop: aftercisinski reflect}
The functor $L^{\Delta[1]}_c : \ccSet \to \sSet$ preserves and reflects weak equivalences, that is
weak equivalences are maps $f$ such that $L^{\Delta[1]}_c f$ is a weak equivalence.
\end{proposition}

\begin{proof}
This is a consequence of Proposition \ref{propositionnateq}.
\end{proof}

\begin{corollary}\label{cor: aftercisinski}
The functor $L^{\square_c[1]}_p : \crSet \to \ccSet$ preserves and reflects weak equivalences.
\end{corollary}

\begin{proof}
It follows from the facts that $L^{\Delta[1]}_p = L^{\Delta[1]}_c \circ L^{\square_c[1]}_p$ 
and that $L^{\Delta[1]}_p$ and $L^{\Delta[1]}_c$ preserve and reflect weak equivalences.
\end{proof}

\begin{proposition}\label{prop: aftercisinski quillen}
The adjunction $L^{\Delta[1]}_c \dashv R^{\Delta[1]}_c$ is a Quillen equivalence as well as the adjunction $L^{\square_c[1]}_p \dashv R^{\square_c[1]}_p$.
\end{proposition}

\begin{proof}
Since both $L^{\square[1]}_p$ and $L^{\Delta[1]}_c$ send generating cofibrations to cofibrations they preserve cofibrations. Moreover since they preserve weak equivalences (Proposition \ref{prop: aftercisinski reflect}
and Corollary \ref{cor: aftercisinski}), they are left Quillen functors.
Since cubical sets with connections form a test category, the functor
 \[
 X \in \ccSet \mapsto N(\square_c/X) \in \sSet
 \]
 induces an equivalence of categories between the homotopy category of cubical sets with connections and the homotopy category of simplicial sets. Since the morphism $(Tr \circ Im)(X):N(\square_c/X)\to L^{\Delta[1]}_p(X)$ (Proposition \ref{propositionnateq}) is an equivalence for any object $X$, then $L^{\Delta[1]}_c$ is an equivalence of categories at the level of homotopy categories. Thus, $L^{\Delta[1]}_c \dashv R^{\Delta[1]}_c$ is a Quillen equivalence. Since
 $L^{\Delta[1]}_p$ is also an equivalence at the level of homotopy categories, by the 2-out-of-3 rule, this is also the case for $L^{\square_c[1]}_p$. Thus $L^{\square_c[1]}_p \dashv R^{\square_c[1]}_p$ is also a Quillen equivalence.
\end{proof}

\subsection{Cubical model categories}

\begin{proposition}\label{propcubicalmodelcat}
  Let $(\mathsf E, \otimes, \mathbb 1)$ be a monoidal model category and let $\catM$ be an $\catE$-model category. Then, any choice of an interval (resp. monoidal interval) $H$ in $\catE$ induces a structure of a $\crSet$-model category (resp. $\ccSet$-model category) on $\catM$.
\end{proposition}

\begin{proof}
 This is a consequence of Proposition \ref{prop:changehomotopicalenrich}.
\end{proof}

%%%%%%%%%%%%%%%%%%%%%%%%%%%%%%%%%%%%%%%%%%%%%%%%%%%%%%%%%%%%%%%%

\section{From quasi-categories to enriched categories}

In this section, we study the link between quasi-categories and categories enriched in cubical sets. Then, when $\catE$ is a monoidal model category equipped with an interval, this allows us to give precise conditions making an adjunction relating the Joyal category of simplicial sets to the category $\catoE$ to be a Quillen adjunction.

\begin{notation}
The category of categories enriched over $\crSet$ and the category of categories enriched over $\ccSet$ are denoted respectively $\catocrSet$ and $\catoccSet$.
\end{notation}

\subsection{From cubical categories to quasi-categories}

\begin{definition}
Let $\square_c^+$ be the category obtained from the category of
cubes with connections $\square$ by adding an empty cube $\emptyset$ so
that
\[
\begin{cases}
	\hom_{\square_c^+} (\emptyset,\emptyset) =\ast,
	\\
	\hom_{\square_c^+} (\emptyset,\square^n) =\ast,
	\\
	\hom_{\square_c^+} (\square^n, \emptyset) =\emptyset.
\end{cases}
\]
In particular, $\square_c$ is the full subcategory of 
$\square_c^+$ spanned by all objects except $\emptyset$.
Moreover, one can extend the monoidal structure of $\square_c$
by $\emptyset \otimes X = X \otimes \emptyset = \emptyset$.
\end{definition}

\begin{definition}
For any integer $n \in \mbn$, let $(W_n, \mu, u)$ be the following category enriched
in
the extended category of cubes with connections $\square_c^+$
\begin{itemize}
 \item[$\triangleright$] its set of objects is $\{0,\ldots,n\}$,
 \item[$\triangleright$] for any $i<j \in \{0,\ldots,n\}$,
 \[
 W_n (i,j) = \square^{j-i-1} ;
 \]
 moreover, $W_n (i,i) = \ast$ and $W_n (j,i)= \emptyset$;
 \item[$\triangleright$] the composition is defined for any $i<j<k$ by
 \[
 \begin{tikzcd}
  W_n(i,j) \times W_n(j,k) \ar[d, "\simeq"]
\\  
  W_n(i,j) \times \ast \times W_n(j,k) 
   \ar[d,"{\mathrm{Id} \times \{1\} \times \mathrm{Id}}"]
   \\
   \square^{j-i-1} \otimes \square^1 \otimes \square^{k-j-1}
   \ar[d, equal]
   \\
   W_n^{pos}(i,k)  \ .
 \end{tikzcd} 
 \]
\end{itemize}
Since, the fully faithful inclusion $\square_c^+ \to \ccSet$ is strict monoidal, one can
also consider $W_n$ as a category enriched in cubical sets with connections.
\end{definition}

\begin{proposition}
The assignment $n \mapsto W_n$ defines a functor from the category $\Delta$ to the category $\mathsf{Cat}_{\square_c^+}$ of categories enriched in $\square_c^+$ and so to the category $\catoccSet$
which contains $\mathsf{Cat}_{\square_c^+}$ as a full subcategory.
\end{proposition}

\begin{proof}
Any coface morphism $\delta^\Delta_i: [n] \to [n+1]$ in the category $\Delta$ induces a functor $W_n \to W_{n+1}$ which is the function $\delta^\Delta_i$ on objects and such that for any $j <i \leq k$ the morphism $W_n (j,k) \to W_{n+1} (j,k+1)$ is given by
\[
\square^{k-j-1} \xrightarrow{\delta^{0}_{i-j-1}} \square^{k-j}\ .
\]
Similarly, any codegeneracy morphism $\sigma^\Delta_i: [n] \to [n-1]$ in the category $\Delta$ induces a cubical functor $W_n \to W_{n-1}$ which is the function $\sigma^\Delta_i$ on objects and such that for any $j < i < k$ the morphism $W_n (j,k) \to W_{n-1} (j,k-1)$ is given by
\[
\square^{k-j-1} \xrightarrow{\gamma_{i-j-1}} \square^{k-j-2}\ .
\]
Moreover, the morphism $W_n (i,k) \to W_{n-1} (i,k-1)$ is $\sigma_0: \square^{k-i-1} \to \square^{k-j-2}$ and the morphism $W_n (j,i+1) \to W_{n-1} (j,i)$ is $\sigma_{j-i-1}: \square^{j-i} \to \square^{j-i-1}$.
\end{proof}

\begin{definition}
 Let $W_c \dashv N^{c}$ be the adjunction relating simplicial sets to $\ccSet$-enriched-categories such that $W_{c}$ is the left Kan extension of the cosimplicial object $n \mapsto W_n$ and 
 \[
 	N^{c} (\mathcal C)(n) = \hom_{\catoccSet} (W_n, \mathcal C)\ .
 \]
\end{definition}

\begin{proposition}
The adjunction $L_c^{\Delta[1]} \circ W_{c} \dashv N^{c} \circ R_c^{\Delta[1]}$ is canonically isomorphic to the adjunction $\mathfrak{C}\dashv N$ of the book Higher Topos Theory \cite[\S 1.1.5]{Lurie09}, in the sense that $L_c^{\Delta[1]} \circ W_{c}$ is canonically isomorphic to $\mathfrak{C}$.
\end{proposition}

\begin{proof}
It suffices to exhibit a canonical isomorphism of cosimplicial simplicial categories between
$n \mapsto L_c^{\Delta[1]} (W_n) $ and $n \mapsto \mathfrak{C}([n])$.
On the one hand, let us notice that the following diagram of monoidal categories and strong monoidal
functors commute
\[
\begin{tikzcd}
	\square_c^+
	\ar[r] \ar[d,"i"] 
	& \ccSet \ar[d]
	\\
	\mathsf{Cat} \ar[r,"N"']
	&\sSet
\end{tikzcd}
\]
(up to a canonical monoidal isomorphic natural transformation)
where the left vertical functor $i$ is the inclusion
of $\square_c^+$ into posets which are particular categories.
This gives us the following commutating square
\[
\begin{tikzcd}
	\square_c^+ \text{ enriched cats}
	\ar[r] \ar[d,"i"] 
	& \catoccSet \ar[d]
	\\
	\mathsf{Cat} \text{ enriched cats} \ar[r,"N"']
	&\catosSet
\end{tikzcd}
\]
The cosimplicial simplicial category $\mathfrak{C}[-]$ is defined in \cite[\S 1.1.5]{Lurie09} as
$\mathfrak{C}[n] = N(F[n])$
for some cosimplicial category enriched in categories $F[n]$ that is canonically
isomorphic to $i(W_n)$. Thus, we get canonical isomorphisms of cosimplicial
simplicial categories
\[
	L_c^{\Delta[1]} (W_n)\simeq Ni(W_n) \simeq NF[n] = \mathfrak{C}[n] . 
\]
\end{proof}

\begin{remark}\label{newremark}
 Note that the construction above already appeared in \cite{RiveraZenalian16}. They showed that the adjunction $W_{c} \dashv N^{c}$ factors the adjunction $\mathfrak{C}\dashv N$ of the book Higher Topos Theory \cite[\S 1.1.5]{Lurie09} as well as the dg nerve of \cite[\S 1.3.1]{Lurie12}.
The main difference between this work and ours is that the factorisation provided here is
formal and does not rely on any combinatorial computation.
\end{remark}

\begin{definition}\cite{Joyal00}\cite{Lurie09}
 The Joyal model category $\sSet_J$ is the category of simplicial sets $\sSet$ equipped with the model structure whose cofibrations are monomorphisms and weak equivalences  are maps $f$
 such that $\mathfrak{C}(f)$ is a weak equivalence for the Dwyer-Kan model structure on $\catosSet$. The fibrant objects are the quasi-categories. Moreover, this is a cartesian closed monoidal model category.
\end{definition}

\begin{lemma}\label{lemmareflects}
A morphism $F: \CCC \to \DDD$ in $\catoccSet$ is a Dwyer-Kan equivalence if
and only if  $L_c^{\Delta[1]}(F)$ is a Dwyer-Kan equivalence.
\end{lemma}

\begin{proof}
On the one hand, any map $F_{x,y}: \CCC(x,y) \to \DDD(x,y)$ is an equivalence if and only if
$L_c^{\Delta[1]}(F_{x,y})$ is an equivalence by Proposition \ref{prop: aftercisinski reflect}. On the other hand, the functor $\pi_0: \ccSet \to \Set$ factorises as
\[
	\ccSet \xrightarrow{L_c^{\Delta[1]}} \sSet \xrightarrow{\pi_0} \Set.
\]
Thus, $\pi_0(F)$ is an equivalence of categories if and only if $\pi_0 L_c^{\Delta[1]}(F)$ is an equivalence since the two functors are the same.
\end{proof}

\begin{lemma}\label{lemmawpreservecof}
 The functor $W_c$ preserves cofibrations.
\end{lemma}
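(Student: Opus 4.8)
The plan is to exploit that $W_c$ is a left adjoint, hence cocontinuous, and to reduce the statement to the generating cofibrations of $\sSet$. Recall that the cofibrations of $\sSet_J$ are exactly the monomorphisms, and that every monomorphism of simplicial sets lies in the saturation of the boundary inclusions $\{\partial \Delta[n] \hookrightarrow \Delta[n]\}_{n \in \mbn}$, i.e.\ in the smallest class of maps closed under pushouts, transfinite compositions and retracts that contains them. The cofibrations of $\catoccSet$ form, by definition, such a saturated class (the one generated by the cofibrations of Theorem \ref{thm:modelstructure}), and $W_c$ preserves all colimits; so it suffices to show that each map $W_c(\partial \Delta[n]) \to W_c(\Delta[n]) = W_n$ is a cofibration of $\catoccSet$.

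First I would compute $W_c(\partial \Delta[n])$. Writing $\partial \Delta[n] = \colim_{S \subsetneq [n]} \Delta[S]$ as the colimit over its proper faces and using cocontinuity of $W_c$, one obtains an enriched category with object set $\{0,\dots,n\}$ whose mapping object $W_c(\partial \Delta[n])(i,j)$ is glued from the cubes $W_{|S|-1}(i,j)$ over the proper faces $S$ with $\{i,i+1,\dots,j\} \subseteq S$. For a pair $(i,j) \neq (0,n)$ the whole interval $\{i,\dots,j\}$ already lies in some proper face (omit any vertex outside $[i,j]$, which exists), so the full cube appears and $W_c(\partial \Delta[n])(i,j) \simeq \square_c[j-i-1] = W_n(i,j)$. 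Only for $(0,n)$ is there no proper face containing every vertex, and there the union of the faces of the cube yields precisely its boundary, $W_c(\partial \Delta[n])(0,n) \simeq \partial \square_c[n-1]$. This is the cubical incarnation of the picture drawn for $\Delta[3]$ in the introduction: the interior cell of the square is the single morphism $(0\,1\cdots n)$ that lives only in the top simplex.

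Next I would exhibit the pushout square
\[
\begin{tikzcd}[ampersand replacement=\&]
{[1]_{\partial \square_c[n-1]}} \arrow[r] \arrow[d,"{[1]_j}"'] \& W_c(\partial \Delta[n]) \arrow[d] \\
{[1]_{\square_c[n-1]}} \arrow[r] \& W_n
\end{tikzcd}
\]
in $\catoccSet$, where $j \colon \partial \square_c[n-1] \hookrightarrow \square_c[n-1]$ is the boundary inclusion, the top functor sends the two objects to $0$ and $n$ and realizes the identification of the previous paragraph on mapping objects, and $[1]_j \colon [1]_{\partial \square_c[n-1]} \to [1]_{\square_c[n-1]}$ is the associated functor. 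The content to check is that attaching the interior $(n-1)$-cube along its boundary alters only the mapping object $(0,n)$ and reconstructs $W_n$. This is where the pushout of enriched categories is delicate: in general it is computed through the free-category monad $\Tfree$ and can introduce new formal composites. Here, however, no non-identity morphism enters $0$ or leaves $n$ in $W_c(\partial \Delta[n])$, so the freshly attached cells of $(0,n)$ admit neither a pre- nor a post-composition; hence the bar construction produces no morphisms beyond the interior of $(0,n)$, while the existing composites landing in $(0,n)$ (the images of the cofaces $\delta^1$) already sit inside $\partial \square_c[n-1]$. Thus the pushout is exactly $W_n$. I expect this verification — that gluing in the top cell creates no spurious composites — to be the main obstacle.

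Finally, since the cofibrations of $\ccSet$ are the monomorphisms (Theorem \ref{thm:aftercisinski}), the map $j$ is in fact a generating cofibration of $\ccSet$, so $[1]_j$ is a generating cofibration of $\catoccSet$ (Theorem \ref{thm:modelstructure}) and in particular a cofibration. Its pushout $W_c(\partial \Delta[n]) \to W_n$ is therefore a cofibration, which completes the reduction; the degenerate cases are immediate, $W_c(\partial\Delta[0]) \to W_0$ being the generating cofibration $\emptyset \to *_{\mathbb 1}$. This shows that $W_c$ preserves cofibrations. That $W_c$ is then left Quillen follows at once, because the weak equivalences of $\sSet_J$ are by definition the maps $f$ with $W_c(f)$ a weak equivalence, so $W_c$ automatically carries acyclic cofibrations to acyclic cofibrations.
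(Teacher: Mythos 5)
Your proof is correct and follows essentially the same route as the paper: the paper's argument consists precisely of reducing to the boundary inclusions and exhibiting the pushout square
\[
\begin{tikzcd}[ampersand replacement=\&]
{[1]}_{\partial \square_c[n-1]} \arrow[r] \arrow[d] \& W_c(\partial \Delta[n]) \arrow[d] \\
{[1]}_{\square_c[n-1]} \arrow[r] \& W_n
\end{tikzcd}
\]
which you identify and, unlike the paper, actually verify (the computation of $W_c(\partial\Delta[n])$ and the check that attaching the top cell creates no spurious composites are left implicit in the paper's proof).
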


Equivalently, the cosimplicial cubical category $n \to W_n$ is Reedy cofibrant.

\begin{proof}
 It suffices to show that for any integer $n$, the map $W_c(\partial \Delta[n]) \to W_n$
 is a cofibration. For $n=0$, this is just the fact that
 $\emptyset \to \ast$ is a cofibration. For $n>0$, this follows from the fact that the following square of cubical categories is a pushout
 \[	
	\begin{tikzcd}
 		{[1]}_{\partial \square_c[n-1]} \arrow[r] \arrow[d,"{[1]_{\delta}}"'] 
		& W_c(\partial \Delta[n]) \arrow[d] \\
		{[1]}_{\square_c[n-1]}  \arrow[r]
		& W_n\ .
	\end{tikzcd}
 \]
\end{proof}

\begin{corollary}\label{corquillenadj}
The adjunction $W_c\dashv N^c$ is a Quillen adjunction for the Joyal model structure.
\end{corollary}

\begin{proof}
It follows from the fact that $W_c$ preserves cofibrations (Lemma \ref{lemmawpreservecof})
and weak equivalences (Lemma \ref{lemmareflects}).
\end{proof}

\begin{corollary}\label{lemmalurie}
 The adjunction $W_c \dashv N^c$ is a Quillen equivalence for the Joyal model structure.
\end{corollary}

\begin{proof}
 It is a consequence of the fact that the adjunction $L^{\Delta[1]}_c \circ W_c \dashv N^c \circ R^{\Delta[1]}_c$ relating simplicial sets to simplicial categories is a Quillen equivalence (see for instance \cite[Proposition 2.2.4.1]{Lurie09}) and the adjunction $L^{\Delta[1]}_c \dashv  R^{\Delta[1]}_c$ is a Quillen equivalence.
\end{proof}

\subsection{Nerve functors}

\begin{definition}
 Let $\catC$ be a cocomplete category. A functor $F: \Delta \to \catC$ induces an adjunction
 \[
\begin{tikzcd}
 \sSet \arrow[r, shift left, "F_!"] &  \catC\ , \arrow[l, shift left, "F^!"]
 \end{tikzcd}
\]
where the left adjoint $F_!$ is the left Kan extension of $F$ and where
\[
F^!(X)_n = \hom_{\catC} (F(n), X)\ .
\]
This right adjoint functor is called a nerve functor. If $\catC$ is a model category, the functor $F$ (or equivalently the functor $F^!$) is said to be homotopy coherent if the adjunction $F_! \dashv F^!$ is a Quillen adjunction with respect to the Joyal model structure.
\end{definition}

We are interested by the case where $\catC$ is the category $\catoE$ equipped with the Dwyer-Kan model structure; where $\catE$ is a monoidal model category. We know that the adjunction $i\dashv S$ relating $\catE$ to sets is monoidal. Thus, it extends to an adjunction also denoted $i \dashv S$ which relates $\catE$-categories to small categories.
\[
	\begin{tikzcd}[ampersand replacement=\&]
		\cato \arrow[r,shift left,"i"] \& \catoE \arrow[l,shift left,"S"]
	\end{tikzcd}
\]
Since the category $\Delta$ is a full subcategory of the category $\cato$, this provides us with a cosimplicial object in $\catoE$, that is 
\[
i([-]) : n \mapsto i([n])\ ,
\]
that we refer to using the notation $n \mapsto [n]$.

\begin{theorem}\label{thm:nerve}
 Let $\catE$ be a monoidal model category and suppose that the category $\catoE$ has a Dwyer-Kan model structure. Then, for any Reedy cofibrant replacement $F$ of the cosimplicial $\catE$-enriched category $n \mapsto [n]$, the nerve $F^!$ is homotopy coherent (that is the adjunction $F_!\dashv F^!$ is a Quillen adjunction).
\end{theorem}

Note first that the fact that $F$ is Reedy cofibrant implies that the functor $F_!: \sSet \to \catoE$ preserves cofibrations. So, it suffices to check that it preserves weak equivalences. One way to prove it is to show that the functor $F_!$ sends the maps
\[
	\Lambda^k[n] \to \Delta[n]\  ,\ 0 < k < n\ ,
\]
and the map $* \to N (* \leftrightarrow *)$ to equivalences. We actually take a shortcut by using the fact that $W_c$ is a left Quillen functor.\\

Recall that the category $\mathrm{Fun}(\Delta, \catoE)$ of cosimplicial $\catE$-categories carries a Reedy model structure where a map $F \to G$ is a cofibration (resp. an acyclic cofibration) if the map
\[
	F_!(\Delta[n]) \sqcup_{F_!(\partial \Delta[n])} G_!(\partial \Delta[n]) \to G_!(\Delta[n])\ ,
\]
is a cofibration (resp. an acyclic cofibration) for any integer $n$.

\begin{lemma}\label{lemma:simpreedy}
 For any monomorphism of simplicial sets $X \to Y$ and any Reedy cofibration $F \to G$ in the category $\mathrm{Fun}(\Delta, \catoE)$, then the map
 \[
 	F_!(Y) \sqcup_{F_!(X)} G_!(X) \to G_!(Y)\ ,
 \]
 is a cofibration. Moreover, this is an acyclic cofibration if $F \to G$ is an acyclic cofibration.
\end{lemma}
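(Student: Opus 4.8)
The plan is to recognize this as an instance of the pushout--product (Leibniz) formalism for the functor of two variables
\[
T : \sSet \times \mathrm{Fun}(\Delta,\catoE) \to \catoE, \qquad T(X,F) = F_!(X),
\]
and to reduce the statement to the generating cofibrations of $\sSet$, where it becomes the very definition of a Reedy cofibration. First I would observe that $T$ is cocontinuous in each variable separately: for fixed $F$ the functor $F_!$ is a left adjoint and hence preserves colimits, while for fixed $X$ the functor $F \mapsto F_!(X) = \colim_{\Delta[n] \to X} F(n)$ is a colimit of the values of $F$ and so preserves colimits as well. Consequently the Leibniz construction sends a monomorphism $i\colon X \to Y$ of simplicial sets and a map $j \colon F \to G$ of cosimplicial objects to the map
\[
\widehat T(i,j)\colon F_!(Y) \sqcup_{F_!(X)} G_!(X) \to G_!(Y),
\]
which is exactly the morphism appearing in the statement.

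Next I would fix the Reedy cofibration $j = (F \to G)$ and consider the class $\mathcal S$ of monomorphisms $i$ of simplicial sets for which $\widehat T(i,j)$ is a cofibration of $\catoE$. Because $T$ is cocontinuous in the first variable, the assignment $i \mapsto \widehat T(i,j)$ carries pushouts, transfinite compositions and retracts of arrows to the corresponding operations in the arrow category of $\catoE$; since the cofibrations of $\catoE$ are stable under these operations, the class $\mathcal S$ is weakly saturated. As the monomorphisms of $\sSet$ are generated under pushout, transfinite composition and retract by the boundary inclusions $\partial \Delta[n] \to \Delta[n]$, it suffices to show that each such inclusion lies in $\mathcal S$.

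The crux is therefore the generating case. For $i = (\partial \Delta[n] \to \Delta[n])$ one has $F_!(\Delta[n]) = F(n)$ and $G_!(\Delta[n]) = G(n)$, so $\widehat T(i,j)$ is precisely the map
\[
F_!(\Delta[n]) \sqcup_{F_!(\partial \Delta[n])} G_!(\partial \Delta[n]) \to G_!(\Delta[n]),
\]
whose being a cofibration is exactly the defining condition for $F \to G$ to be a Reedy cofibration. Hence $\partial \Delta[n] \to \Delta[n] \in \mathcal S$ for every $n$, and the first assertion follows. For the second assertion I would run the identical argument with \emph{cofibration} replaced throughout by \emph{acyclic cofibration}: when $F \to G$ is a Reedy acyclic cofibration the generating maps $\widehat T(\partial \Delta[n] \to \Delta[n], j)$ are acyclic cofibrations by definition, the class of $i$ yielding acyclic cofibrations is again weakly saturated since acyclic cofibrations of $\catoE$ are stable under pushout, transfinite composition and retract, and it contains the boundary inclusions. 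I do not anticipate a genuine obstacle beyond carefully invoking the standard saturation property of the Leibniz construction; the only real content is the identification of the generating pushout--products with the Reedy latching maps, which is immediate from the definitions recalled above.
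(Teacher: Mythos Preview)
Your argument is correct and amounts to the same reduction as the paper's: both approaches reduce to the generating boundary inclusions $\partial\Delta[n]\to\Delta[n]$ and identify the resulting pushout--product with the Reedy latching map, which is a cofibration (resp.\ acyclic cofibration) by hypothesis. The only cosmetic difference is that you argue on the left via weak saturation of the class of monomorphisms $i$ for which $\widehat T(i,j)$ is a cofibration, whereas the paper (following its Proposition~\ref{prop:reedy2}) argues on the right by adjunction, showing that $G^!(A)\to G^!(B)\times_{F^!(B)}F^!(A)$ is an acyclic fibration for every acyclic fibration $A\to B$ of $\catoE$ and testing this against $\partial\Delta[n]\to\Delta[n]$; these are the two standard dual presentations of the same Leibniz argument.
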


In particular, for any simplicial set $X$ and for any Reedy acyclic cofibration $F \to G$, the map $F_!(X) \to G_!(X)$ is an acyclic cofibration.

\begin{proof}
The proof is the same as Proposition \ref{prop:reedy2} using the standard Reedy structure on the category $\Delta$.
\end{proof}

\begin{lemma}\label{lemmaonegivesall}
 If a Reedy cofibrant replacement $F$ of the cosimplicial $\catE$-category $n \mapsto [n]$ is homotopy coherent, then all its Reedy cofibrant replacements are homotopy coherent.
\end{lemma}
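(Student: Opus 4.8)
The plan is to reduce the statement to a comparison of the two replacements through a common third one. As already noted just before the lemma, any Reedy cofibrant $H$ makes $H_!$ preserve cofibrations, so to prove that a Reedy cofibrant replacement is homotopy coherent it suffices to show that its left adjoint preserves Joyal weak equivalences. Thus, writing $[\bullet]$ for the cosimplicial $\catE$-category $n \mapsto [n]$, I would start with two Reedy cofibrant replacements $F \xrightarrow{\sim} [\bullet]$ and $G \xrightarrow{\sim} [\bullet]$, assume that $F_!$ preserves Joyal weak equivalences, and aim to establish the same property for $G_!$.

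First I would build a common replacement dominating both. Forming the coproduct $F \sqcup G$ and factoring the induced map $F \sqcup G \to [\bullet]$ in the Reedy model structure on $\mathrm{Fun}(\Delta,\catoE)$ as a cofibration followed by an acyclic fibration $F \sqcup G \hookrightarrow W \xrightarrow{\sim} [\bullet]$, I obtain a Reedy cofibrant object $W$ (since $F \sqcup G$ is Reedy cofibrant) together with cofibrations $F \to W$ and $G \to W$. A two-out-of-three argument against $W \xrightarrow{\sim} [\bullet]$ then shows that both $F \to W$ and $G \to W$ are acyclic cofibrations between Reedy cofibrant objects.

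Next I would transport homotopy coherence along these acyclic cofibrations. By the ``in particular'' consequence of Lemma \ref{lemma:simpreedy} (applied to the monomorphism $\emptyset \to X$), for every simplicial set $X$ the induced maps $F_!(X) \to W_!(X)$ and $G_!(X) \to W_!(X)$ are acyclic cofibrations, hence weak equivalences in $\catoE$. Now, given a Joyal weak equivalence $X \to Y$, the naturality square relating $F_!$ and $W_!$ has weak equivalences as its horizontal edges and, by the hypothesis on $F$, a weak equivalence $F_!(X) \to F_!(Y)$ as its left edge; two-out-of-three then yields that $W_!(X) \to W_!(Y)$ is a weak equivalence, so $W$ is homotopy coherent. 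Repeating the argument with the square relating $G_!$ and $W_!$ gives that $G_!(X) \to G_!(Y)$ is a weak equivalence, which is exactly what remains to prove.

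The only delicate point is that one cannot in general expect a direct weak equivalence $F \to G$, which is why the detour through the coproduct-and-factorization object $W$ is needed; once $W$ is in hand, everything else is two-out-of-three combined with the Reedy machinery of Lemma \ref{lemma:simpreedy}.
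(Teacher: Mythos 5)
Your proof is correct and follows essentially the same route as the paper: form the coproduct $F \sqcup G$, factor the map to $n \mapsto [n]$ to get a common Reedy cofibrant replacement receiving acyclic cofibrations from both $F$ and $G$, apply Lemma \ref{lemma:simpreedy} to get objectwise weak equivalences $F_!(X) \to W_!(X)$ and $G_!(X) \to W_!(X)$, and conclude by two-out-of-three. Your $W$ is exactly the paper's $G'$.
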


\begin{proof}
Let us suppose that $F$ is homotopy coherent.
 Let $G$ be a cofibrant replacement of $[-]$. Then $G_!$ preserves cofibrations by Lemma \ref{lemma:simpreedy}. Thus it suffices to show that $G_!$ preserves weak equivalences. Let us consider the following factorisation in the Reedy model category of cosimplicial $\catE$-categories
\[
	\begin{tikzcd}
		F \sqcup G \arrow[r,hook] & G' \arrow[r,"\sim"] & {[-]}\ .
	\end{tikzcd}
 \]
Since $F$ and $G$ are cofibrant, then, the morphisms $F \to G'$ and $G \to G'$ are both acyclic cofibrations for the Reedy model structure. Thus, by Lemma \ref{lemma:simpreedy} for any simplicial set $X$, the two maps $F_!(X) \to G'_!(X) \leftarrow G_!(X)$ are weak equivalences. Subsequently, for any Joyal weak equivalence of simplicial sets $f: X \to Y$, $F_!(f)$ is a weak equivalence if and only if
$G'_!(f)$ is a weak equivalence if and only if 
$G_!(f)$ is a weak equivalence. To conclude, since $F_!$ preserves weak equivalences, then
$G_!$ preserves weak equivalences.
\end{proof}

\begin{lemma}\label{lemmareflects2}
A morphism $F: \CCC \to \DDD$ in $\catocrSet$ is a Dwyer-Kan equivalence if
and only if  $L_c^{\square_c[1]}(F)$ is a Dwyer-Kan equivalence.
\end{lemma}

\begin{proof}
This follows from the same arguments as those used in Lemma \ref{lemmareflects}, using
the fact the the functor $L_c^{\square_c[1]}: \crSet \to \ccSet$ reflects weak
equivalences.
\end{proof}

\begin{proof}[Proof of Theorem \ref{thm:nerve}]
First, let $G$ be a Reedy cofibrant replacement of the functor
\begin{align*}
 \Delta &\to \catocrSet\\
 n &\to [n]\ .
\end{align*}
Then the functor $L^{\square_c[1]}_p \circ G$ is also Reedy cofibrant. Besides, by Corollary \ref{corquillenadj}, the functor
\begin{align*}
 \Delta &\to \catoccSet\\
 n &\to W_n\ .
\end{align*}
is an homotopy coherent Reedy cofibrant replacement of the functor $n \to [n]$. So by Lemma \ref{lemmaonegivesall}, $L^{\square_c[1]}_p \circ G$ is homotopy coherent. Since the functor $L^{\square_c[1]}_p$ reflects weak equivalences (Lemma \ref{lemmareflects2}), then $G$ is also homotopy coherent. Then, for any interval $H$ of $\catE$, the cosimplicial $\catE$-enriched category $L^H_p \circ G$ is homotopy coherent. So again by Lemma \ref{lemmaonegivesall}, $F$ is homotopy coherent.
\end{proof}

\begin{proposition}\label{propnerveeq}
We use the same notation as in Theorem \ref{thm:nerve}. The following propositions are equivalent. 
\begin{enumerate}
 \item There exists an interval $H$ of $\catE$ such that the adjunction $L^H_p\dashv R^H_p$ relating precubical sets to $\catE$ is a Quillen equivalence.
 \item For any Reedy cofibrant replacement $F$ of the cosimplicial object $n \mapsto [n]$ of $\catoE$ the adjunction $F_! \dashv F^!$ is a Quillen equivalence.
 \item There exists a Reedy cofibrant replacement $F$ of the cosimplicial object $n \mapsto [n]$ of $\catoE$ such that the adjunction $F_! \dashv F^!$ is a Quillen equivalence.
\end{enumerate}
\end{proposition}

\begin{proof}[Proof of Proposition \ref{propnerveeq}]
The equivalence between $(2)$ and $(3)$ follows from the same arguments as those used to prove corollary \ref{corquilleneq}. Then, let $F : \Delta \to \catocrSet$ be a Reedy cofibrant replacement of $n \to [n]$. By the equivalence between $(2)$ and $(3)$ (for $\catE = \ccSet$) and by Corollary \ref{lemmalurie}, the adjunction $L^{\square_c[1]}_p \circ F_! \dashv F^! \circ R^{\square_c[1]}_p$ is a Quillen equivalence. Besides, the adjunction
\[
\begin{tikzcd}
\catocrSet \arrow[r, shift left, "L^{\square_c[1]}_p"] &  \catoccSet \arrow[l, shift left, "R^{\square_c[1]}_p"]
\end{tikzcd}
\]
is a Quillen equivalence. So, by the 2-out-of-3 rule, the adjunction $F_! \dashv F^!$ is also a Quillen equivalence.
Now, suppose $(1)$. Then, the adjunction $L^H_p \circ F_! \dashv F^!\circ R^H_p$ is a Quillen equivalence, which implies $(2)$.
Conversely, suppose $(3)$, then for any interval $H$ of $\catE$, the adjunction $L^H_p \circ F_! \dashv F^!\circ R^H_p$ is a Quillen equivalence
and since $F_! \dashv F^!$ is a Quillen equivalence, then the adjunction
\[
\begin{tikzcd}
\catocrSet \arrow[r, shift left, "L^{H}_p"] &  \catoE \arrow[l, shift left, "R^{H}_p"]
\end{tikzcd}
\]
is also a Quillen equivalence. In particular, for any precubical set $X$ and any fibrant object $Y$ of $\catE$, a morphism
$L^H_p([1]_X) \to [1]_Y$ is an equivalence if and only if the adjoint morphism $[1]_X \to R^H_p [1]_Y$ is an equivalence. This rewrites as:
$L^H_p(X) \to Y$ is an equivalence if and only if the adjoint morphism $X \to R^H_p Y$ is an equivalence ; that is, the adjunction 
\[
\begin{tikzcd}
\crSet \arrow[r, shift left, "L^{H}_p"] &  \catE \arrow[l, shift left, "R^{H}_p"]
\end{tikzcd}	
\]
is a Quillen equivalence.
\end{proof}

%%%%%%%%%%%%%%%%%%%%%%%%%%%%%%%%%%%%%%%%%%%%%%%%%%%%%%%%%%%%%%%%

\section{Applications}

The goal of this final section is to describe various contexts where cubical categories appear.\\

Let $(\catE,\otimes ,\II)$ be a monoidal model category and let $H$ be a monoidal interval. We know that  it induces a Quillen monoidal adjunction $L_c^H\dashv R_c^H$ relating cubical sets with connections to $\catE$ which extends to the level of enriched categories.
\[
\begin{tikzcd}
 \catoccSet \arrow[r, shift left, "L^H_c"] &  \mathsf{Cat}_\catE\ . \arrow[l, shift left, "R^H_c"]
 \end{tikzcd}
\]
Moreover, any $\catE$-model category $\catM$ has an induced structure of a $\ccSet$-model category. In this section, we describe three examples of such a monoidal model category $\catE$: the simplicial sets with the Joyal model structure, the chain complexes and the differential graded coalgebras.

\subsection{A remark about the Boardman--Vogt construction}

A theory of homotopy coherent nerve is developed in \cite[\S 6]{MoerdijkWeiss07}. Roughly, for any monoidal model category $\catE$ (satisfying some conditions) equipped with a monoidal interval $H$, there exists a endofunctor $W_H: \catoE \to \catoE$ called the Boardman--Vogt construction together with a natural transformation $W_H \to \mathrm{Id}$ such that the functor
 \[
 	W_H \CC \to \CC
 \]
 is a cofibrant replacement of $\CC$ provided the unit maps $\II \to \CC(x,x)$ are cofibrations. More generally, any functor $F : \CC \to \DD$ which is injective on objects and such that the maps $F_{x,y} :\CC(x,y) \to \DD(F(x), F(y))$ are cofibrations in $\catE$ induces a cofibration of $\catE$-enriched categories
 \[
 	W_H(F): W_H \CC \to W_H \DD\ .
 \]
 Then, the functor $n \in \Delta \mapsto W_H [n] \in \catoE$ induces an adjunction
\[
\begin{tikzcd}
\sSet \arrow[r, shift left, "W_H"] &  \catoE\ , \arrow[l, shift left, "N^H"]
\end{tikzcd}
\]
where $N^H$ is an homotopy coherent nerve.

\begin{lemma}
The functor $W_H$ is isomorphic to the composite functor $L^H_c \circ W_c$.
\end{lemma}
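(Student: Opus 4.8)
The plan is to exploit that both functors under comparison are cocontinuous, so that it suffices to identify them on the cosimplicial object $n \mapsto [n]$. Indeed, the functor denoted $W_H : \sSet \to \catoE$ is by definition the left Kan extension of the cosimplicial $\catE$-category $n \mapsto W_H[n]$, hence cocontinuous; and $L^H_c \circ W_c$ is a composite of the two left adjoints $W_c : \sSet \to \catoccSet$ and $L^H_c : \catoccSet \to \catoE$, hence also cocontinuous. Since $\sSet$ is the free cocompletion of $\Delta$, two cocontinuous functors out of $\sSet$ agree as soon as their restrictions to the representables $\Delta[n]$ do. Therefore it is enough to produce a natural isomorphism of cosimplicial $\catE$-categories
\[
	W_H[n] \;\cong\; L^H_c(W_n)\ .
\]

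First I would compare the two $\catE$-categories objectwise. Both have object set $\{0,\ldots,n\}$. By definition $W_n(i,j) = \square_c[j-i-1]$, and since $L^H_c$ is strong monoidal and cocontinuous with $L^H_c(\square_c[1]) = H$, the explicit formula of Proposition \ref{prop:extensiontocubes} gives $L^H_c(\square_c[m]) \simeq H^{\otimes m}$; hence
\[
	L^H_c(W_n)(i,j) \;\simeq\; H^{\otimes (j-i-1)}\ ,
\]
which is precisely the mapping object $W_H[n](i,j)$ of the Boardman--Vogt construction of the linear category $[n]$ (one copy of the interval $H$ per internal object of the chain $i < i+1 < \cdots < j$). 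For the composition, the structure map of $W_n$ is the coface $\delta^1_{j-i-1} : \square_c[k-i-2] \to \square_c[k-i-1]$; applying $L^H_c$ turns it into the map $H^{\otimes(k-i-2)} \to H^{\otimes(k-i-1)}$ inserting the absorbing point $\delta^1_H : \II \to H$ in the slot indexed by the object $j$. This is exactly the composition in $W_H[n]$, where concatenating two chains marks the new internal object with the absorbing vertex of the interval; so the objectwise identifications assemble into an isomorphism of $\catE$-categories.

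It then remains to check naturality in $\Delta$, that is, compatibility with cofaces and codegeneracies. The cubical functors $W_n \to W_{n+1}$ and $W_n \to W_{n-1}$ induced by $\delta^\Delta_i$ and $\sigma^\Delta_i$ were described above in terms of the cubical maps $\delta^0$, $\gamma$ and $\sigma$; applying the strong monoidal functor $L^H_c$ replaces these by $\delta^0_H$, $\gamma_H$ and $\sigma_H$, which are precisely the maps by which the Boardman--Vogt construction is functorial on the injections and surjections of $\Delta$ (inserting the unit point $\delta^0_H$ on a doubled edge, using the multiplication $\gamma_H$ when two edges are composed, and contracting via $\sigma_H$). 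The main obstacle is therefore not the formal reduction but matching the coend/tree definition of $W_H$ from \cite{MoerdijkWeiss07} (or \cite{BergerMoerdijk06}) with the explicit cubical presentation of $W_n$: one must verify that, for the totally ordered category $[n]$, the Boardman--Vogt coend collapses to the cube $H^{\otimes(j-i-1)}$ equipped with exactly the composition, coface and codegeneracy maps described above. Once this dictionary is established, the natural isomorphism of cosimplicial objects follows, and cocontinuity upgrades it to the desired isomorphism $W_H \cong L^H_c \circ W_c$.
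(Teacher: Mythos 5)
Your proof is correct and follows the same route as the paper: the paper's entire proof is the one-line observation that the cosimplicial objects $n \mapsto W_H[n]$ and $n \mapsto L^H_c W_n$ are isomorphic, from which the isomorphism of the induced cocontinuous functors on $\sSet$ is immediate. You have simply spelled out the details of that objectwise identification (mapping objects, compositions, and the cosimplicial structure maps) that the paper leaves implicit.
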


\begin{proof}
 It follows from the fact that the functor $n \mapsto W_H[n]$ is isomorphic to the functor $n \mapsto L^H_c  W_n$.
\end{proof}

Therefore, the functor $N^H$ is isomorphic to the functor $N^c \circ R^H_c$.

\subsection{The underlying $(\infty,1)$-category of an $(\infty,2)$-category}

Let us endow the category of simplicial sets with the Joyal model structure which is a monoidal model structure. 

\begin{definition}
 Let $\III$ be the groupoid with two objects $0$ and $1$ such that 
 \[
 \III(i,j) = *\ ,\  \forall i,j \in \{ 0,1 \}\ . 
 \]
\end{definition}

In this context, the simplicial nerve of the groupoid $\III$
\[
H= N(\III)
\]
is a monoidal interval. Subsequently, there exists a monoidal Quillen adjunction relating cubical sets to simplicial sets with the Joyal model structure.
\[
\begin{tikzcd}
 \ccSet \arrow[r, shift left, "L^H_c"] &  \sSet_{J} \arrow[l, shift left, "R^H_c"] \end{tikzcd}
\]

\begin{proposition}
 Let $X$ be a quasi-category. Then $R^H_c(X)$ is canonically equivalent to $R^{\Delta[1]}_c(\mathrm{Core}(X))$ where $\mathrm{Core}(X)$ is the maximal Kan complex contained in $X$.
\end{proposition}

\begin{proof}
On the one hand, for any integer $n$, we have 
\begin{align*}
 \hom_{\crSet} (\square_c[n], R^H_c (X)) &\simeq \hom_{\sSet}(L^H_c (\square_c[n]),X)\\
 &\simeq \hom_{\sSet} (H^{\otimes n}, X)\\
 &\simeq \hom_{\sSet} (N(\III^n), X)\\
 &\simeq \hom_{\mathrm{quasi-categories}}(N(\III^n),X)\\
 &\simeq \hom_{\mathrm{Kan-complexes}}(N(\III^n),\mathrm{Core}(X))\text{ since }N(\III^n)\text{ is a Kan complex}\\
 &\simeq \hom_{\mathrm{\sSet}}(N(\III^n),\mathrm{Core}(X))\\
 &\simeq \hom_{\crSet} (\square_c[n], R^H_c (\mathrm{Core}(X))).
\end{align*}
Therefore, the canonical map $R^H_c (\mathrm{Core}(X)) \to R^H_c (X)$ is an isomorphism. On the other hand, in the Kan--Quillen model structure on simplicial sets whose fibrant objects are Kan complexes, the inclusion $\Delta[1] \to N(\III)$ is an equivalence. So, by Proposition \ref{prop:changeint}, we obtain an equivalence $R^{\Delta[1]}_c(\mathrm{Core}(X)) \to R^{H}_c(\mathrm{Core}(X))$.
\end{proof}

The adjunction $L^H_c\dashv R^H_c$ extends to the level of enriched categories.
\[
\begin{tikzcd}
 \catoccSet \arrow[r, shift left, "L^H_c"] &  \catosSet \arrow[l, shift left, "R^H_c"] \end{tikzcd}
\]
where the category $\catosSet$ of simplicial categories is equipped with the Dwyer-Kan model structure induced by the Joyal model structure on simplicial sets. Any simplicial category $\mathcal C$ whose mapping objects are quasi-categories (which is the case for any fibrant object of $\catosSet$), represents an $(\infty,2)$-category. The underlying $(\infty,1)$-category is the simplicial category with the same objects but whose mapping space between any two objects $x$ to $y$ is
\[
	\mathrm{Core}(\CC(x,y))\ .
\]
Let us denote it by $\mathrm{Core}(\CC)$. Then, the $(\infty,1)$-category represented by $\mathrm{Core}(\CC)$ is equivalent to the $(\infty,1)$-category represented by $R^{\Delta[1]}_c(\mathrm{Core}(\CC))$ (indeed, they yield the same quasi-category from the usual homotopy coherent nerve  functors). The proposition just above implies that 
\[
	R^{\Delta[1]}_c(\mathrm{Core}(\CC)) \simeq R^H_c \CC\ .
\]
Hence, $R^H_c \CC$ represents the underlying $(\infty,1)$-category of $\CC$.\\

Besides, by Proposition \ref{propcubicalmodelcat}, any $\sSet_J$-model category inherits a structure of a cubical model category. For instance, we have the following proposition.

\begin{proposition}
 The Joyal model category $\sSet_J$ is a $\ccSet$-model category.
\end{proposition}

\subsection{The dg nerve}

\begin{remark}\label{newremark2}
 Note that this example already appeared in \cite{RiveraZenalian16}.
\end{remark}

Let $\mbk$ be a commutative ring. We denote by $\dgMod$ the category of chain complexes of $\mbk$-modules. When equipped with the projective model structure, this is a monoidal model category. The following chain complex
\[
\begin{cases}
 C[1]_{0} = \mbk \cdot (0) \oplus  \mbk \cdot (1)\\
 C[1]_1 =  \mbk \cdot (01)\\
 C[1]_k = 0\text{ if }k\notin \{0,1\}\\
 d(01)= (1)-(0)\ .
\end{cases}
\]
has the structure of a monoidal interval when equipped with the maps $\delta^0, \delta^1: \mbk \to C[1]$, $\sigma: C[1]\to \mbk$ and $\gamma: C[1] \otimes C[1] \to C[1]$ defined by the formulas
\[
\begin{cases}
\delta^0 (1) = (0)\\
\delta^1 (1) = (1)\\
\sigma(0) = \sigma (1) =1\\
\sigma (01)=0\\
 \gamma((0)\otimes x) = \gamma( x\otimes (0)) =x\\
 \gamma((1)\otimes (1)) = (1)\\
 \gamma((01)\otimes (1)) = \gamma((1)\otimes (01)) =0\ .
\end{cases}
\]

This gives an adjunction $W_{C[1]} \dashv N^{C[1]}$ relating differential graded categories to simplicial set. The right adjoint functor $N^{C[1]}$ is the dg-nerve of dg-categories described in \cite[\S 1.3.1]{Lurie12}. Besides, any $\dgMod$-model category has an induced structure of a $\ccSet$-model category.

\begin{proposition}
 The category of chain complexes of $\mbk$-modules is a $\ccSet$-model category.
\end{proposition}

\subsection{The coalgebraic nerve}

Here $\mbk$ is a field.

\subsubsection{A coalgebraic model of the interval}

\begin{definition}
 A counital coassociative coalgebra $(V, w, \tau)$ is a comonoid in the category of chain complexes. We denote by $\uCog$ the category of such coalgebras.
\end{definition}

Since $\mbk$ is a field, the category of counital coassociative coalgebras admits a monoidal model structure whose cofibrations and weak equivalences are respectively degreewise injections and quasi-isomorphisms ; see \cite{GetzlerGoerss99}.
The chain complex monoidal interval $C[1]$ has the structure of a coalgebra as follows
\[
\begin{cases}
 \tau (0) = \tau (1) =1\\
 w(i) = (i) \otimes (i) \text{ for }i\in \{0,1\}\\
 w(01)=(0)\otimes(01)+(01)\otimes (1)\ .
\end{cases}
\]
Moreover, a straightforward checking leads to well known following result.

\begin{lemma}
The data of $(C[1], w,\tau, \delta^0,\delta^1,\sigma, \gamma)$ defines a monoidal interval in the category of counital coassociative coalgebras. 
\end{lemma}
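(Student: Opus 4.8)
The strategy is to reduce the entire statement to the corresponding fact at the level of chain complexes through the forgetful functor $U\colon \uCog \to \dgMod$, leaving only one genuinely coalgebraic verification to perform. First I would record the relevant properties of $U$: it is faithful, and it is strong monoidal, since the tensor product of two coalgebras has underlying complex the tensor product of the underlying complexes and $U$ carries the coalgebra unit $\mbk$ to $\II = \mbk$. Moreover, by the description of the model structure on $\uCog$, a morphism is a degreewise injection (respectively a quasi-isomorphism) in $\uCog$ exactly when its image under $U$ is one in $\dgMod$, so $U$ detects both cofibrations and weak equivalences.

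We have already observed in the previous subsection that $(C[1], \delta^0, \delta^1, \sigma, \gamma)$ is a monoidal interval in $\dgMod$. Since all the monoidal-segment axioms — associativity of $\gamma$, unitality of $\delta^0$, the monoid-morphism property of $\sigma$, and the absorbing property of $\delta^1$ — are equations built from composites of the structure maps and the tensor product, and since they hold after applying the faithful strong monoidal functor $U$, they hold already in $\uCog$. Likewise the two interval conditions transfer: $(\delta^0,\delta^1)$ is a cofibration because $U(\delta^0,\delta^1)$ is a degreewise injection, and $\sigma$ is a weak equivalence because $U(\sigma)$ is a quasi-isomorphism (indeed $H_*(C[1]) = \mbk$, concentrated in degree $0$, and $\sigma$ realises this identification). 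Hence the only new content is that the five structure maps are in fact morphisms of coalgebras, and most of these are cheap. For $(C[1], w, \tau)$ one checks on the basis $\{(0),(1),(01)\}$ that $w$ is a chain map (using $d(01)=(1)-(0)$), that $w$ is coassociative, and that $\tau$ is a two-sided counit; this is the routine computation alluded to in the statement. The maps $\delta^0$ and $\delta^1$ are coalgebra morphisms because $(0)$ and $(1)$ are group-like ($w(i)=(i)\otimes(i)$, $\tau(i)=1$), and a coalgebra map $\mbk \to C[1]$ is precisely the choice of a group-like element. The map $\sigma$ is a coalgebra morphism for a trivial reason: it coincides with the counit $\tau$ (both send the vertices to $1$ and vanish on $(01)$), and the counit of a coalgebra is tautologically a morphism to the trivial coalgebra $\mbk$.

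The one substantive point — and the place where the Koszul signs intervene — is that $\gamma\colon C[1]\otimes C[1] \to C[1]$ is a morphism of coalgebras, the source being equipped with the tensor-product comultiplication obtained from $w$ by inserting the symmetry of $\dgMod$. The counit compatibility $\tau\circ\gamma = \tau\otimes\tau$ is immediate, both sides sending a pair of vertices to $1$ and vanishing whenever a factor has positive degree. The comultiplication compatibility $w\circ\gamma = (\gamma\otimes\gamma)\circ w_{C[1]\otimes C[1]}$ must be checked on the nine basis elements of $C[1]\otimes C[1]$; the nontrivial ones are those involving the degree-one generator, namely $(0)\otimes(01)$, $(01)\otimes(1)$, $(1)\otimes(01)$ and the degree-two element $(01)\otimes(01)$. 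In each case I would expand $w(01)=(0)\otimes(01)+(01)\otimes(1)$ in both tensor factors, apply $\gamma\otimes\gamma$ using the $\max$-rule defining $\gamma$, and track the single Koszul sign produced by transposing two odd-degree elements; the relations $\gamma((1)\otimes(01))=\gamma((01)\otimes(1))=0$ are exactly what force the spurious terms to cancel. I expect this basis-by-basis verification, with careful bookkeeping of the signs, to be the main obstacle, although it remains entirely elementary. Once it is complete, $\gamma$ is a coalgebra morphism, and the reduction of the first two paragraphs finishes the proof.
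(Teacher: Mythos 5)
Your proposal is correct and is essentially the ``straightforward checking'' that the paper leaves implicit: the equational monoidal-segment axioms and the cofibration/weak-equivalence conditions transfer along the faithful, strong monoidal forgetful functor to $\dgMod$ (which creates both classes of maps), so the only genuine content is that $w$, $\tau$, $\delta^0$, $\delta^1$, $\sigma$ and $\gamma$ are compatible with the coalgebra structure. Your identification of $\gamma$ as the one nontrivial case is right, and the basis-by-basis check does close: on $(01)\otimes(01)$ the Koszul sign from transposing the two copies of $(01)$ produces a term that is killed by $\gamma((01)\otimes(1))=\gamma((1)\otimes(01))=0$, and on $(0)\otimes(01)$ one recovers exactly $w(01)=(0)\otimes(01)+(01)\otimes(1)$, so the computation confirms the lemma as the paper asserts.
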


Therefore, any $\uCog$-model category has an induced structure of a $\ccSet$-model category. We will study the example of dg associative algebras.

\subsubsection{The $\uCog$-model category of $\Aa_\infty$-algebras.}

The remaining of this article is devoted to the description through our cubical approach of the higher structures appearing in the study in of associative algebras  in chain complexes over a field. What is done there can easily be extended to the case of algebras over a nonsymmetric operad using the theory developed in \cite{LeGrignou16a}.\\

\begin{definition}
 A differential graded (dg) associative algebra (or dg algebra for short) $(\Aa,m)$ is the data of a chain complex $\Aa$ together with an associative product $m :\Aa \otimes\Aa \to \Aa$.
\end{definition}

\begin{definition}
 A dg conilpotent coassociative coalgebra (or dg conilpotent coalgebra for short) $(\CC, w)$ is the data of a chain complex $\CC$ together with a coproduct $w :\CC \to \CC \otimes\CC $ which is coassociative that is $(w \otimes \mathrm{Id}) \circ w = (\mathrm{Id} \otimes w) \circ w$ and conilpotent, that is, for any element $x \in \CC_m$, there exists an integer $n$ such that
 \[
 	w^{(n)}(x) := (w \otimes \mathrm{Id}^{\otimes n-1}) \circ \cdots \circ  (w \otimes \mathrm{Id}) \circ w (x) =0\ .
 \]
\end{definition}

There exists an adjunction 
\[
\begin{tikzcd}
\NilCog \arrow[r, shift left, "\Omega"] &  \Alg \arrow[l, shift left, "B"] 
\end{tikzcd}
\]
relating dg algebras to dg conilpotent coalgebras. The right adjoint $B$ called the bar functor is defined as follows.
\begin{itemize}
 \item[$\triangleright$] The underlying graded coalgebra of $B \Aa$ is the cofree conilpotent coalgebra on the suspension of $\Aa$,
 \[
 	\ov T s\Aa := \bigoplus_{n \geq 1} s\Aa^{\otimes n}
 \]
 whose coproduct is given by
 \[
 	w(sx_1 \otimes\cdots \otimes  sx_n) = \sum_{i=1}^{n-1} (sx_1 \otimes \cdots sx_i) \otimes (sx_{i+1} \otimes\cdots \otimes  sx_n)\ .
 \]
 \item[$\triangleright$] The differential $d_{B\Aa}$ on $B\Aa$ is the only coderivation on $\ov T s\Aa$ whose projection on the cogenerators $s\Aa$ is the following composite map
\begin{align*}
 \overline{T} s\Aa \twoheadrightarrow s\Aa \oplus s\Aa \otimes s\Aa & \to s \Aa \\
 sx &\mapsto -sd_\Aa x\\
 sx \otimes sy &\mapsto (-1)^{|x|} s \gamma_{\Aa} (x \otimes y)
 \end{align*}

The fact that $d_{B\Aa}^2 = 0$ follows from the fact that $d_\Aa^2=0$, that $\gamma_\Aa$ is associative and that $\gamma_\Aa$ and $d_\Aa$ satisfy the Leibniz equation.
\end{itemize}

\begin{proposition}
 There exists a model structure on the category of dg algebras whose fibrations are degreewise surjections and whose weak equivalences are quasi-isomorphisms.
\end{proposition}

\begin{theorem}\cite{LefevreHasegawa03}
 There exists a model structure on the category dg conilpotent coalgebras transferred through the adjunction $\Omega \dashv B$, that is
\begin{itemize}
 \item[$\triangleright$] a cofibration is a morphism $f$ such that $\Omega(f)$ is a cofibration of algebras,
 \item[$\triangleright$] a weak equivalence is a morphism $f$ such that $\Omega(f)$ is a quasi-isomorphism.
\end{itemize}
Moreover, for any dg algebra $\Aa$, the morphism
 \[
 	\Omega B \Aa \to \Aa
 \]
 is a cofibrant replacement of $\Aa$, which implies that the adjunction $\Omega \dashv B$ is a Quillen equivalence.
\end{theorem}

\begin{proposition}\cite{AnelJoyal13}\cite{LeGrignou16a}
 The category $\NilCog$ and the category $\Alg$ are both $\uCog$-model categories. Moreover, there exists a natural isomorphism of counital coassociative coalgebras
 \[
 \{\CC,B\Aa \} \simeq \{\Omega \CC,\Aa \}\ ,
 \]
 for any dg conilpotent coalgebra $\CC$ and for any dg algebra $\Aa$.
\end{proposition}

Let us give a hint on what are these enrichments. On the one hand, dg algebras are canonically cotensored over counital coassociative coalgebras. Indeed, for any dg algebra $A$ and any counital coassociative coalgebra $V$, the chain complex $[V,A]$ has the canonical structure of a dg algebra called the convolution algebra. Then, the tensorisation and the enrichment may be obtained by the adjoint functor theorem. On the other hand, the category of dg conilpotent coalgebras is canonically tensored over counital coassociative coalgebras. Indeed, for any dg conilpotent coalgebra $\CC$ and any counital coassociative coalgebra $V$, the chain complex $\CC \otimes V$ has the canonical structure of a dg conilpotent coalgebra. Then, the cotensorisation and the enrichment may be obtained with the adjoint functor theorem.

\subsubsection{The infinity category of dg algebras}

Restricting the $\uCog$-enriched category of dg conilpotent coalgebras to bar constructions of dg algebras (which are in particular fibrant-cofibrant dg conilpotent coalgebras) we obtain an $\uCog$-enriched category which is essentially the same as the $\uCog$-enriched category $\Aa lg$ whose objects are dg algebras and such that
 \[
 	\Aa lg (\Aa, \Aa') := 	 \{B\Aa,B\Aa'\}\ .
 \]
Then, using the interval $C[1]$, one obtains a $\ccSet$-enriched category $\Aa lg^\square$ with the same objects as $\Aa lg$ and such that
\begin{align*}
 	\Aa lg^\square (\Aa, \Aa')(n) &= \hom_{\NilCog} (B\Aa \otimes C[1]^{\otimes n}, B\Aa')\\
	&\simeq \hom_{\uCog} (C[1]^{\otimes n} ,\{ B\Aa, B\Aa'\})\\
	&\simeq \hom_{\uCog} (C[1]^{\otimes n} ,\{ \Omega B\Aa, \Aa'\})\\
	&\simeq \hom_{\Alg} (\Omega B\Aa, [C[1]^{\otimes n},\Aa'])\ .
\end{align*}
Finally, the nerve $N^{c}\Aa lg^\square$ of this cubical category is the quasi-category whose $n$ vertices are the data of
\begin{itemize}
 \item[$\triangleright$] $n+1$ dg algebras $\Aa_0,\ldots,\Aa_n$,
 \item[$\triangleright$] for any integers $0 \leq i<j \leq n$, a morphism of dg conilpotent coalgebras
 \[
 	f_{i,j}: (B \Aa_i ) \otimes C[1]^{\otimes j-i-1} \to B \Aa_j\ ,
 \]
 which is equivalent to the data of a morphism of dg algebras 
 
\[
	\Omega B \Aa_i  \to [ C[1]^{\otimes j-i-1} , \Aa_j]\ ,
\]
 \item[$\triangleright$] such that the following diagram commutes 
 \[
	\begin{tikzcd}[ampersand replacement=\&]
		B \Aa_i \otimes C[1]^{\otimes j-i-1} \otimes C[1]^{\otimes k-j-1} \arrow[r,"f_{i,j} \otimes \mathrm{Id}"] \arrow[d]
		\& B \Aa_j \otimes C[1]^{\otimes k-j-1} \arrow[dd,"f_{j,k}"]\\
		B \Aa_i \otimes C[1]^{\otimes j-i-1} \otimes \II \otimes C[1]^{\otimes k-j-1} \arrow[d,"\mathrm{Id} \otimes \delta^1 \otimes \mathrm{Id}"]\\
		 B \Aa_i \otimes C[1]^{\otimes k-i-1} \arrow[r,"f_{i,k}"']
		 \& B \Aa_k
	\end{tikzcd}
\]
for any integers $0 \leq i < j < k \leq n$.
\end{itemize}

%%%%%%%%%%%%%%%%%%%%%
\bibliographystyle{amsalpha}

\begin{thebibliography}{MW07}

\bibitem[AJ13]{AnelJoyal13}
Mathieu Anel and Andr\'e Joyal, \emph{Sweedler theory of (co)algebras and the
  bar-cobar constructions}, arXiv:1309.6952 (2013).

\bibitem[B.19]{LeGrignou16a}
Le~Grignou B., \emph{Homotopy theory of unital algebra}, Algebraic And
  Geometric Topology \textbf{19} (2019), 1541–1618.

\bibitem[BM06]{BergerMoerdijk06}
Clemens Berger and Ieke Moerdijk, \emph{The {B}oardman-{V}ogt resolution of
  operads in monoidal model categories}, Topology \textbf{45} (2006), no.~5,
  807--849.

\bibitem[BM12]{BergerMoerdijk13}
\bysame, \emph{On the homotopy theory of enriched categories}, Quarterly
  Journal of Mathematics \textbf{64} (2012).

\bibitem[BP81]{BrownHiggins81}
R.~Brown and Higgins P., \emph{On the algebra of cubes}, Journal of pure and
  appiled algebra \textbf{21} (1981), no.~3, 233--260.

\bibitem[Cav14]{Caviglia14}
Giovanni Caviglia, \emph{{A Model Structure for Enriched Coloured Operads}},
  arXiv:1401.6983 (2014).

\bibitem[Cis06]{Cisinski06}
Denis-Charles Cisinski, \emph{Les pr\'efaisceaux comme mod\`eles des types
  d'homotopie}, Ast\'erisque, 2006.

\bibitem[Cis14]{Cisinski14}
\bysame, \emph{Univalent universes for elegant models of homotopy types}.

\bibitem[Day70]{Day70}
Brian Day, \emph{On closed categories of functors}, Reports of the Midwest
  Category Seminar IV, Lecture Notes in Mathematics \textbf{137} (1970), 1--38.

\bibitem[GG99]{GetzlerGoerss99}
Ezra Getzler and Paul Goerss, \emph{{A model category structure for
  differential graded coalgebras}},
  \texttt{http://www.math.northwestern.edu/$\sim$pgoerss/papers/model.ps}
  (1999).

\bibitem[GM03]{GrandisMauri03}
Marco Grandis and Luca~Johan Mauri, \emph{Cubical sets and their site}, Theory
  and Applications of Categories \textbf{11} (2003), no.~8, 185--211.

\bibitem[Hov99]{Hovey99}
Mark Hovey, \emph{Model categories}, Mathematical Surveys and Monographs,
  vol.~63, American Mathematical Society, Providence, RI, 1999.

\bibitem[Isa09]{Isaacson09}
S.B. Isaacson, \emph{Cubical homotopy theory and monoidal model categories},
  Ph.D. thesis, Harvard University, 2009. \MR{MR2713397}

\bibitem[Jar02]{Jardine02}
J.F. Jardine, \emph{Cubical homotopy theory: a beginning},
  http://www.math.uwo.ca/?jardine/papers/preprints/index.shtml. (2002).

\bibitem[Jar06]{Jardine06}
\bysame, \emph{Categorical homotopy theory}, Homology Homotopy Appl. \textbf{8}
  (2006), no.~1, 71--144.

\bibitem[Joy]{Joyal00}
A.~Joyal, \emph{The theory of quasi-categories and its applications},
  http://mat.uab.cat/~kock/crm/hocat/advanced--course/Quadern45--2.pdf.

\bibitem[Kel74]{Kelly74}
G.~M. Kelly, \emph{Doctrinal adjunction}, Category Seminar (Berlin, Heidelberg)
  (Gregory~M. Kelly, ed.), Springer Berlin Heidelberg, 1974, pp.~257--280.

\bibitem[KL01]{KellyLack00}
G.~M. Kelly and S.~Lack, \emph{V-cat is locally presentable or locally bounded
  if v is so}, TAC \textbf{8} (2001).

\bibitem[KV18]{KapulkinVoevodsky18}
Krzysztof Kapulkin and Vladimir Voevodsky, \emph{Cubical approach to
  straightening}.

\bibitem[LH03]{LefevreHasegawa03}
K.~Lefevre-Hasegawa, \emph{Sur les {A}-infini cat\'egories},
  \texttt{arXiv.org:math/0310337} (2003).

\bibitem[Lur09]{Lurie09}
Jacob Lurie, \emph{Higher topos theory}, Annals of Mathematics Studies, vol.
  170, Princeton University Press, Princeton, NJ, 2009.

\bibitem[Lur12]{Lurie12}
\bysame, \emph{Higher algebra},
  \texttt{www.math.harvard.edu/$\sim$lurie/papers/HigherAlgebra.pdf}, 2012.

\bibitem[Mal09]{Maltsiniotis09}
Georges Maltsiniotis, \emph{La cat\'egorie cubique avec connexions est une
  cat\'egorie test stricte}, Homology Homotopy Appl. \textbf{11} (2009), no.~2,
  309--326.

\bibitem[MW07]{MoerdijkWeiss07}
Ieke Moerdijk and Ittay Weiss, \emph{Dendroidal sets}, Algebr. Geom. Topol.
  \textbf{7} (2007), 1441--1470. \MR{2366165 (2009d:55014)}

\bibitem[Ros09]{Rosicky09}
Jiri Rosicky, \emph{On combinatorial model categories}, Appl. Cat. Str.
  \textbf{17} (2009), 303--316.

\bibitem[RZ18]{RiveraZenalian16}
Manuel Rivera and Mahlmoud Zeinalian, \emph{Cubical rigidification, the cobar
  construction and the based loop space}, Algebraic And Geometric Topology
  \textbf{18} (2018), 3789–3820.

\end{thebibliography}
\def\cprime{$'$}
\providecommand{\bysame}{\leavevmode\hbox to3em{\hrulefill}\thinspace}
\providecommand{\MR}{\relax\ifhmode\unskip\space\fi MR }
% \MRhref is called by the amsart/book/proc definition of \MR.
\providecommand{\MRhref}[2]{%
  \href{http://www.ams.org/mathscinet-getitem?mr=#1}{#2}
}
\providecommand{\href}[2]{#2}

\end{document}